
\documentclass{daj}

\dajAUTHORdetails{%
  title = {Tur\'an and Ramsey Problems for Alternating Multilinear Maps}, 
  author = {Youming Qiao},
  plaintextauthor = {Youming Qiao},
    %
    %
  plaintexttitle = {Turan and Ramsey Problems for Alternating Multilinear Maps}, 
    %
    %
    %
   %
  keywords = {Ramsey and Tur\'an problems, extremal combinatorics, alternating 
  multilinear maps, exterior algebras, $p$-groups, Grassmannians},
}   

\dajEDITORdetails{%
   year={2023},
   number={12},
   received={14 January 2021},   
   revised={30 March 2022},    
   published={17 August 2023},  
   doi={10.19086/da.84736},       
}   

\usepackage{latexsym}
\usepackage{amsmath}
\usepackage{amssymb}
\usepackage{amsthm}
\usepackage{hyperref}
\usepackage{cite}
\usepackage{graphicx}
\usepackage{color}
\usepackage{enumitem}
\usepackage{braket}
\usepackage{bm}
\usepackage{xspace}
\setlist[description]{font=\normalfont\itshape\textbullet\space}
\setcounter{MaxMatrixCols}{20}
\usepackage[all]{xy}


\theoremstyle{plain}
\newtheorem{theorem}{Theorem}[section]
\newtheorem{corollary}[theorem]{Corollary}
\newtheorem{lemma}[theorem]{Lemma}
\newtheorem{observation}[theorem]{Observation}
\newtheorem{proposition}[theorem]{Proposition}

\theoremstyle{definition}

\newtheorem{remark}[theorem]{Remark}
\newtheorem{conjecture}[theorem]{Conjecture}
\newtheorem{definition}[theorem]{Definition}

\newtheorem{question}[theorem]{Question}

\newcommand{\GL}{\mathrm{GL}}
\newcommand{\F}{\mathbb{F}}
\newcommand{\Z}{\mathbb{Z}}

\newcommand{\C}{\mathbb{C}}

\newcommand{\N}{\mathbb{N}}

\newcommand{\rk}{\mathrm{rk}}

\newcommand{\rad}{\mathrm{rad}}

\newcommand{\M}{\mathrm{M}}

\newcommand{\tuple}[1]{\mathbf{#1}}
\newcommand{\tens}[1]{\mathtt{#1}}
\newcommand{\spa}[1]{\mathcal{#1}}
\renewcommand{\span}{\mathrm{span}}

\newcommand{\cA}{\spa{A}}
\newcommand{\cB}{\spa{B}}
\newcommand{\cC}{\spa{C}}
\newcommand{\cD}{\spa{D}}

\newcommand{\tA}{\tens{A}}

\newcommand{\tD}{\tens{D}}

\newcommand{\vzero}{\tuple{0}}
\newcommand{\vstar}{{\bf *}}

\newcommand{\fB}{\mathfrak{B}}
\newcommand{\fC}{\mathfrak{C}}

\newcommand{\bA}{\mathbf{A}}

\DeclareMathOperator{\id}{id}

\newcommand{\too}%
{\xrightarrow{\text{\raisebox{-3pt}{$\sim$}}\,}}

\newcommand{\TN}{\mathrm{T}}
\newcommand{\FP}{\mathrm{FP}}
\newcommand{\CRN}{\mathrm{R}^{\mathrm{c}}}
\newcommand{\ARN}{\mathrm{R}^{\mathrm{a}}}
\newcommand{\RN}{\mathrm{R}}
\newcommand{\Gr}{\mathrm{Gr}}
\newcommand{\tr}[1]{#1^{\mathrm{t}}}
\newcommand{\size}{\mathrm{size}}

\begin{document}

\begin{frontmatter}[classification=text]


\author[yq]{Youming Qiao\thanks{Research partially supported by Australian 
Research Council DP200100950.}}

\begin{abstract}
Guided by the connections between hypergraphs and exterior algebras, we study 
Tur\'an  and Ramsey type problems for 
alternating multilinear maps. This study lies at the intersection of 
combinatorics, group theory, and algebraic geometry, and has origins in the works 
of Lov\'asz (\emph{Proc. Sixth British
Combinatorial Conf.}, 1977), Buhler, Gupta, and Harris (\emph{J. Algebra}, 1987), 
and Feldman and Propp 
(\emph{Adv. Math.}, 1992).

Our main result is a Ramsey theorem for alternating bilinear maps. Given $s, t\in 
\N$, $s, t\geq 2$, and an alternating bilinear map $\phi:V\times V\to U$ with
$\dim(V)\geq s\cdot t^4$, we show that there exists either a dimension-$s$ 
subspace 
$W\leq V$ 
such 
that 
$\dim(\span(\phi(W, W)))=0$, or a dimension-$t$ subspace $W\leq V$ such that 
$\dim(\span(\phi(W, 
W)))=\binom{t}{2}$. This result has natural group-theoretic (for finite 
$p$-groups) 
and geometric 
(for Grassmannians) implications, and leads to new Ramsey-type questions for 
varieties of groups and Grassmannians. 
\end{abstract}
\end{frontmatter}


\section{Introduction}\label{sec:intro}

The main result of this paper is a Ramsey theorem for alternating 
bilinear maps, or equivalently, for linear spaces of alternating bilinear forms. 

To state the result we need some definitions. Let $\F$ be a field. 
Let $U$ be an $n$-dimensional vector space over $\F$. We use $W\leq U$ to 
denote that $W$ is a subspace of $U$. Recall that a bilinear form $f:U\times U\to 
\F$ is \emph{alternating}, if for any $u\in U$, $f(u, u)=0$. For $W\leq U$, the 
\emph{restriction} of $f$ to $W$ is denoted as $f|_W:W\times W\to \F$. 
Let $\Lambda(U)$ be the linear space of alternating bilinear forms on $U$. For 
$\cA\leq \Lambda(U)$ and $W\leq U$, $\cA|_W:=\{f|_W\mid f\in \cA\}\leq\Lambda(W)$. 

\begin{theorem}\label{thm:main}
Let $\F$ be a field, $s, t\in \N$, $s, t\geq 2$, and $n\geq s\cdot t^4$. Let $U$ 
be an $n$-dimensional vector space over $\F$. Then for any 
$\cA\leq\Lambda(U)$, either there exists $V\leq U$, $\dim(V)=s$, such that 
$\cA|_V$ is the zero space, or there exists $W\leq U$, $\dim(W)=t$, such that 
$\cA|_W=\Lambda(W)$. 
\end{theorem}

We will prove Theorem~\ref{thm:main} in Section~\ref{sec:thm_proof}, after 
presenting some preliminaries in Section~\ref{sec:prel}. Theorem~\ref{thm:main} 
has natural interpretations in group theory and geometry, 
and it can be naturally understood as a linear Ramsey theorem in the context of  
Tur\'an and Ramsey problems for alternating multilinear maps. These will be 
explained in Section~\ref{sec:app} and~\ref{sec:discussion}. 
A closely related 
result is 
Weaver's ``quantum'' 
Ramsey theorem \cite{Wea17}. That result concerns the so-called operator systems 
from quantum information, 
which 
are actually linear spaces of matrices over $\C$ satisfying certain conditions. 
Cliques and anticliques 
can be defined for such matrix spaces, which are in close analogy with the 
totally-isotropic spaces and the complete spaces studied here. In
\cite{Wea17}, it was shown that 
when $n\geq 8s^{11}$, any operator system has either an $s$-clique or an 
$s$-anticlique.  

The initial strategy for our proof (in particular Steps 1 and 2, see 
Section~\ref{sec:thm_proof}) follows closely some ideas in 
Weaver's proof in \cite{Wea17} and Sims' work on enumerating $p$-groups 
\cite[Sec. 2]{Sims65}. However, new ideas are indeed required, as in the 
alternating matrix space setting, there are no diagonal matrices which are crucial 
for Weaver's proof in the operator system setting. 
Furthermore, our result works 
over any field.


\section{Preliminaries}\label{sec:prel} 

\subsection{Some notation} For $n\in\N$, 
$[n]:=\{1, \dots, n\}$. The set of size-$\ell$ subsets of $[n]$ is denoted as 
$\binom{[n]}{\ell}$. 

\subsection{Vector spaces} For a field $\F$, $\F^n$ is the linear space 
consisting of length-$n$ 
column vectors over $\F$. We use $e_i$ to denote the $i$th standard basis vector 
in $\F^n$. 
The dual space of $\F^n$ is denoted as $(\F^n)^*$, and the dual vector 
of $v\in \F^n$ is denoted as $v^*$. 
For $S\subseteq\F^n$, $\span(S)$ 
denotes the subspace spanned by vectors in $S$. For $v\in \F^n$ and $i\in[n]$, 
$v(i)$ denotes the $i$th component of $v$. 
For $S\subseteq\F^n$, 
$S^\perp:=\{v\in \F^n \mid \forall u\in S, \tr{v}u=0\}$.

\subsection{Matrices} 
We use $\M(\ell\times n, \F)$ to denote the linear space 
of $\ell 
\times n$ 
matrices over $\F$, and set $\M(n, \F):=\M(n\times n, \F)$. For $A\in\M(\ell\times 
n, \F)$, $A(i,j)$ is the $(i,j)$th entry of $A$. We shall use $\vzero$ to denote 
all-zero 
vectors or matrices of appropriate sizes.
A matrix $A\in \M(n, 
\F)$ 
is \emph{alternating}, if for any $v\in \F^n$, $\tr{v}Av=0$. 
For 
$(i, j)\in[n]\times [n]$, $E_{i,j}\in \M(n, 
\F)$ is 
the $n\times n$ matrix with the $(i, j)$th entry being $1$, and the rest entries 
being $0$. For 
$\{i,j\}\in \binom{[n]}{2}$, $i<j$, $A_{i,j}\in 
\M(n, \F)$ is the $n\times n$ alternating matrix with the $(i,j)$th entry being 
$1$, the 
$(j,i)$th entry being $-1$, and the rest entries being $0$. 
The 
general linear group of degree $n$ over $\F$ is denoted by $\GL(n, 
\F)$.

\subsection{Alternating matrix spaces}\label{subsec:alt_mat_sp}
Let
$\Lambda(n, \F)$ be 
the linear space of $n\times n$ alternating matrices over $\F$. Subspaces of 
$\Lambda(n, \F)$ are called alternating matrix spaces. 


Two alternating matrix spaces $\cA, \cB\leq \Lambda(n, \F)$ are \emph{isometric}, 
if there exists $T\in\GL(n, \F)$, such that 
$\cA=\tr{T}\cB T:=\{\tr{T}BT \mid B\in\cB\}$.

Let $\cA\leq\Lambda(n, \F)$. Suppose $U\leq \F^n$ is of dimension $d$, 
and let $T\in \M(n\times d, \F)$ be a matrix whose column 
vectors span $U$. The restriction of $\cA$ to $U$ via $T$ is $\cA|_{U, 
T}:=\{\tr{T}AT \mid A\in \cA\}\leq \Lambda(d, \F)$. Given another $T'\in\M(n\times 
d,\F)$ whose columns 
also 
span $U$, $\cA|_{U, T}$ and $\cA|_{U, T'}$ are isometric. Therefore, we may write 
$\cA|_U$ as the restriction of $\cA$ to $U$ via some $T\in\M(n\times d, \F)$ whose 
columns span $U$.

We say that $U$ is a \emph{totally-isotropic space}\footnote{Totally-isotropic 
spaces are also called totally singular \cite{Atk73}, isotropic \cite{BGH87}, 
and $\ell$-singular 
\cite{DS10} (for alternating $\ell$-linear maps) in the literature. We 
adopt the terminologies of totally-isotropic and 
anisotropic (see 
Section~\ref{subsec:opp_struct}), which are from the study of bilinear forms 
\cite{MH73}, and have been used in e.g. \cite{BMW17}. } for $\cA$, if 
$\dim(\cA|_U)=0$. That is, for any $u, u'\in U$ and $A\in \cA$, we have 
$\tr{u}Au'=0$. 
We say that $U$ is a \emph{complete space} for $\cA$, if 
$\dim(\cA|_U)=\binom{d}{2}$. 

Given $v\in \F^n$, the \emph{degree} of $v$ in $\cA$, $\deg_\cA(v)$, is the 
dimension of $\{ Av \mid A\in \cA\}\leq \F^n$. When $\cA$ is clear from the 
context, 
we may simply write $\deg(v)$ instead of $\deg_\cA(v)$. The \emph{minimum degree} 
of $\cA$, denoted as $\delta(\cA)$, is the minimum over the degrees over non-zero 
$v$ in $\cA$. 

Given $S\subseteq \F^n$, the \emph{radical space} of $S$ in $\cA$ is 
$\rad_\cA(S):=\{u\in \F^n \mid \forall A\in \cA, v\in S, \tr{u}Av=0\}$. We may 
simply 
write $\rad_\cA(\{v\})$ as $\rad_\cA(v)$. When $\cA$ is 
clear from the context, we may simply write $\rad(S)$ instead of $\rad_\cA(S)$. We 
also define the \emph{radical space} of $\cA\leq \Lambda(n, \F)$ as
$\rad(\cA):=\rad_\cA(\F^n)=\{v\in \F^n \mid \forall A\in\cA, Av=0\}$.

\subsection{Alternating multilinear maps} An $\ell$-linear map 
$\phi:(\F^n)^{\times \ell}\to \F^m$ is \emph{alternating}, if for any $(v_1, 
\dots, 
v_\ell)\in (\F^n)^{\times \ell}$ where $v_i=v_j$ for some $i\neq j$, $\phi(v_1, 
\dots, v_\ell)=0$.
Here, $(\F^n)^{\times \ell}$ denotes 
the $\ell$-fold Cartesian product of $\F^n$. Two alternating $\ell$-linear maps 
$\phi, \psi: (\F^n)^{\times \ell}\to \F^m$ are \emph{isomorphic}, if there exists 
$(S, 
T)\in\GL(n, \F)\times\GL(m, \F)$, such that for any $v_1, \dots, v_\ell\in \F^n$, 
$\phi(S(v_1), \dots, S(v_\ell))=T(\phi(v_1, \dots, v_\ell))$.

%


\subsection{3-way arrays} Matrices are 2-way arrays, i.e. an 
array with two indices. We shall also need the notion of \emph{3-way arrays}, 
namely arrays with three indices. We use $\M(\ell\times m\times 
n, \F)$ to denote the linear space of 3-way arrays with the index set being 
$[\ell]\times[m]\times [n]$. Let $\tA\in\M(\ell\times m\times n, \F)$ be a 3-way 
array. Following \cite{KB09,GQ19}, we define the following. The \emph{frontal 
slices} of $\tA$ are $A_1, 
\dots, A_n\in \M(\ell\times m)$, where $A_k(i, j)=\tA(i,j,k)$. The \emph{tube 
fibres} of $\tA$ are $v_{i,j}\in \F^n$, $i\in[\ell]$, $j\in[m]$, where 
$v_{i,j}(k)=\tA(i,j,k)$.

3-way arrays are also referred to as 3-tensors in 
some literature. We adopt 3-way arrays, because 3-tensors are usually considered 
to be $3$-way arrays together with 
the natural action of $\GL(\ell, \F)\times\GL(m, \F)\times\GL(n, \F)$. In this 
paper, as the reader will see soon, 3-way arrays are used to record the structure 
constant of alternating 
bilinear maps. Therefore, the group action of relevance in this context is by 
$\GL(n, \F)\times 
\GL(m, \F)$ 
where $\GL(n, \F)$ acts covariantly on the first two indices. 

\subsection{Relations between 3-way arrays, bilinear maps, and matrix spaces} 
\label{subsec:rel}
It is not hard to see that alternating bilinear maps, alternating matrix spaces, 
and 3-way arrays are closely related. We spell out some details here.

An alternating bilinear map 
$\phi:\F^n\times 
\F^n\to 
\F^m$ can be represented as a tuple of alternating matrices 
$\bA=(A_1, \dots, A_m)\in\Lambda(n, \F)^m$, such that for any $u, v\in \F^n$, 
$\phi(u, v)=\tr{(\tr{u}A_1v, \dots, \tr{u}A_mv)}$.

From an alternating matrix tuple $\bA\in\Lambda(n, \F)^m$, we can construct a 
$3$-way array $\tA\in\M(n\times n\times m, \F)$ whose frontal slices are $A_i$'s. 
We can also construct an alternating matrix space $\cA=\span\{A_1, \dots, 
A_m\}\leq\Lambda(n, \F)$. 

Let $\cA\leq\Lambda(n, \F)$ be an $m$-dimensional alternating matrix space. Let
$\bA=(A_1, \dots, A_m)\in\Lambda(n, \F)$ be an ordered linear basis of $\cA$. Then 
$\bA$ gives rise to an alternating bilinear map $\phi$ and a 3-way array $\tA$ as 
above. Note that different ordered bases of $\cA$ yield different but isomorphic
alternating bilinear maps.

\section{Proof of Theorem~\ref{thm:main}}\label{sec:thm_proof}


\subsection{Restatement of Theorem~\ref{thm:main}} By fixing a basis for $U$ and 
identifying alternating bilinear forms with alternating matrices, 
Theorem~\ref{thm:main} can be restated as follows. 

\paragraph{Theorem~\ref{thm:main}, restated.} Let $\F$ be a field, $s, t\in \N$, 
$s, t\geq 2$, and $n\geq s\cdot t^4$. For any alternating matrix space
$\cA\leq\Lambda(n, \F)$, either there exists an $s$-dimensional totally-isotropic 
space of $\cA$, or there exists a $t$-dimensional complete space of $\cA$.


\subsection{Proof outline} 
To start with, by restricting to any subspace of dimension $s\cdot t^4$, we can 
assume that $\cA\leq\Lambda(n, \F)$ where $n=s\cdot t^4$. 

The proof consists of four steps. Before going into the 
details, let us first outline the objective for each step. 


\begin{description}
\item[Step 1] This step either constructs a $s$-dimensional 
totally-isotropic space of $\cA$, or computes a $n'$-dimensional $P\leq \F^n$, 
such that the minimum degree $\delta(\cA|_P)\geq t^4$. 

Let $\cB=\cA|_P\leq\Lambda(n', \F)$. The goal of the next three steps is to 
construct a dimension-$(t+1)$ complete space for $\cB$.

\item[Step 2] Let $t'=t^2$. This step
constructs a 
$Q_2\in \M(n'\times (t'+1), \F)$, of rank-$(t'+1)$, such that $\cC=\tr{Q_2}\cB 
Q_2\leq 
\Lambda(t'+1, \F)$ (the restriction of $\cB$ to the subspace of $\F^{n'}$ spanned 
by the columns of $Q_2$) contains matrices $C_1, \dots, C_{t'}\in \cC$, where
$C_i=\begin{bmatrix}
\tilde C_i & \vzero \\
\vzero & \vzero 
\end{bmatrix}$, $\tilde C_i$ has size $(i+1)\times (i+1)$, and $C_i(i, 
i+1)=1$.

\item[Step 3] Let $r=t+\binom{t}{2}$, and recall that $t'=t^2$. This step 
constructs
$Q_3\in\GL(t'+1, \F)$, such that $\cD=\tr{Q_3}\cC Q_3\leq 
\Lambda(t'+1, 
\F)$ contains 
$D_1, 
\dots, D_r\in \Lambda(t'+1, \F)$ satisfying (1) for $i\in [t]$, 
$D_i=\begin{bmatrix}
\tilde D_i & \vzero \\
\vzero & \vzero 
\end{bmatrix}$ where $\tilde D_i\in \Lambda(i+1, \F)$ and $D_i(i, i+1)=1$,  
and 
(2) for 
$i\in[\binom{t}{2}]$, $D_{t+i}=\begin{bmatrix}
\tilde D_{t+i} & \vzero & \vzero & \vzero \\
\vzero & 0 & 1 & \vzero \\
\vzero & -1 & 0 & \vzero \\
\vzero & \vzero & \vzero & \vzero 
\end{bmatrix}$, where $\tilde D_{t+i}$ is of size $t+1+2(i-1)$.

\item[Step 4] 
Based on $D_1, \dots, D_r$ from Step 3, this step constructs $Q_4\in\GL(t'+1, 
\F)$, such that $W=\span\{ e_1, \dots, e_{t+1}\}$ is a complete space for 
$\tr{Q_4}\cD Q_4$. 
It is clear that the complete space $W$ of $\tr{Q_4}\cD Q_4$ translates to a 
complete space for $\cA|_P$ through $Q_2$, $Q_3$, and $Q_4$, giving us the desired 
complete 
space for $\cA$.
%
\end{description}

Each step relies on a lemma which could be of independent interest. In the 
following, we explain these steps in detail.

\subsection{Step 1} 
The first step 
relies on the following lemma. 
Recall that the minimum degree $\delta(\cdot)$ is defined in 
Section~\ref{subsec:alt_mat_sp}.
\begin{lemma}\label{lem:iso_min-deg}
Let $s, d\in \N$, $s, d\geq 2$, and $n=s\cdot d$. For any $\cA\leq\Lambda(n, 
\F)$, either there exists a totally-isotropic space of 
dimension $s$ of $\cA$, or there exists $P\leq\F^n$, such that $\dim(P)\geq 2d$, 
and $\delta(\cA|_P)\geq 
d$. 
\end{lemma}

\begin{proof} 
Consider the following 
procedure in at most $s$ rounds. The basic 
idea is that in each round, if there exists a non-zero vector $v$ of degree $<d$, 
we restrict the current alternating matrix space to $\rad(v)$.

More specifically, in the first round, if there are no non-zero $v_1\in\F^n$ 
such that $\deg_\cA(v_1)<d$, then
$\F^n$ satisfies what we need for $P$. Otherwise, there exists a non-zero 
$v_1\in\F^n$ 
such that $\deg_\cA(v_1)<d$. Let $S_1=\span\{ v_1\}$, 
$T_1=\rad_\cA(S_1)$, and $\cA_1=\cA|_{T_1}$. By the alternating 
property, $v_1\in T_1$, so $S_1\leq T_1$. Make $R_1$ a complement subspace 
of $S_1$ in $T_1$. 
%
By $\deg_\cA(v_1)<d$, we have $\dim(T_1)\geq 
(s-1)d+1$ and $\dim(R_1)\geq (s-1)d$. Also note that $S_1\leq \rad(\cA_1)$. We 
then 
continue to 
the next round.

%

Then before the $i$th round, $i=2, \dots, s-1$, we have obtained from the 
$(i-1)$th round the following data.
\begin{itemize}
\item $S_{i-1}=\span\{ v_1, \dots, v_{i-1}\}\leq \F^n$, $\dim(S_{i-1})=i-1$, 
and $S_{i-1}$ is a totally-isotropic space of $\cA$.
\item $T_{i-1}\leq \F^n$, $\dim(T_{i-1})\geq (s-(i-1))d+(i-1)$, and $S_{i-1}\leq 
T_{i-1}$.
\item $R_{i-1}$, a complement subspace of $S_{i-1}$ in $T_{i-1}$. Note that 
$\dim(R_{i-1})\geq (s-(i-1))d\geq (s-(s-1-1))d=2d$.
\item $\cA_{i-1}=\cA|_{T_{i-1}}$, and $S_{i-1}\leq \rad(\cA_{i-1})$.
\end{itemize}

We then try to find a non-zero $v_i\in 
R_{i-1}$ such that 
$\deg_{\cA_{i-1}}(v_i)<d$. 
If no such $v_i$ exist, then $R_{i-1}$ satisfies what 
we need for $P$. 
Otherwise, let $S_i=\span\{ v_1, \dots, v_i\}$, 
$T_i=\rad_{\cA_{i-1}}(S_i)=\rad_{\cA_{i-1}}( v_i)$, 
and $\cA_i=\cA|_{T_i}$. Set $R_i$ to be a complement subspace of $S_i$ in $T_i$.
As $\deg_{\cA_{i-1}}(v_i)<d$, $\dim(T_i)\geq (s-i)d+i$, and $\dim(R_i)\geq 
(s-i)d$. Clearly $S_i\leq\rad(\cA_i)$, so in particular, $S_i$ is a 
totally-isotropic space for $\cA$.
We then continue 
to the 
$(i+1)$th round. 

Now suppose we just enter the $s$th round. At this point, we have 
$\dim(T_{s-1})\geq 
d+(s-1)$, and $\dim(R_{s-1})\geq d$. 
Take 
any non-zero $v_s\in R_{s-1}$, and set 
$S_s=\span\{ v_1, \dots, v_s\}$. Since $S_{s-1}\leq\rad(\cA_{s-1})$, 
$S_s$ is 
a totally-isotropic space of 
dimension $s$.
\end{proof}

Back to our original setting, recall that $n=s\cdot t^4$. Let $d=t^4$, so 
$n=s\cdot d$. Applying Lemma~\ref{lem:iso_min-deg} gives us 
either 
a totally-isotropic 
space of 
dimension $s$, or a subspace $P$ such that $n'=\dim(P)\geq 2d$ and 
$\delta(\cA|_P)\geq d$. In the former case, we can conclude the proof of 
Theorem~\ref{thm:main}. 
In the latter case, we will construct a 
dimension-$(t+1)$ totally-isotropic space for $\cB$ in the next three steps.


\subsection{Step 2}  The second step relies on the following lemma.

\begin{lemma}\label{lem:step2}
Let $d, t'\in \N$, $d=t'^2$. Suppose $\cB\leq\Lambda(n', \F)$ satisfies that 
$n'\geq 2d$ and $\delta(\cB)\geq d$. Then there exists $Q\in \M(n'\times (t'+1), 
\F)$, of rank-$(t'+1)$, such that $\cC=\tr{Q}\cB 
Q\leq 
\Lambda(t'+1, \F)$ contains matrices $C_1, \dots, C_{t'}\in \cC$, where
$C_i=\begin{bmatrix}
\tilde C_i & \vzero \\
\vzero & \vzero 
\end{bmatrix}$, $\tilde C_i$ has size $(i+1)\times (i+1)$, and $C_i(i, 
i+1)=1$.
\end{lemma}
\begin{proof}
%


Consider the following procedure in at most $t'$ 
rounds. 

In the first round, take any non-zero $w_1\in \F^{n'}$, and find $B_1\in \cB$, 
such that $B_1w_1\neq\vzero$. Then there exists $w_2\in\F^{n'}$
such that $\tr{w_2}B_1w_1\neq 0$. Such $B_1$ exists, as $\dim(\rad_{\cB}(w_1))\leq 
n'-d$. Set $T_2=(\span\{ B_1w_1, B_1w_2\})^\perp$, and 
$W_2=\span\{ w_1, w_2\}$. By $\tr{w_2}B_1w_1\neq 0$, $T_2\cap W_2=0$. By the 
alternating property, $\dim(W_2)=2$. 

%

Then before the $i$th round, $i=2, \dots, t'$, we have obtained $w_1, 
\dots, 
w_i\in \F^{n'}$, $B_1, 
\dots, B_{i-1}\in \cB$, 
such that $\forall j\in[i-1]$, (1) $\tr{w_j}B_jw_{j+1}\neq 0$, and (2) $\forall 
1\leq 
k\leq i$, $\forall j+1<\ell\leq i$, $\tr{w_k}B_jw_\ell=0$ . We 
also have $T_i=(\span\{ B_jw_k \mid j\in[i-1], k\in[i]\})^\perp$, and $W_i=\span\{ 
w_1, \dots, w_i\}$, such that $T_i\cap W_i=0$. 

We claim that there exist 
$w_{i+1}\in T_i$ and $B_i\in \cB$, such that $\tr{w_{i+1}}B_iw_i\neq 0$. To see 
this, note that $\dim(T_i)\geq n'-(i-1)\cdot i$ and $\dim(\rad_\cB(w_i))\leq 
n'-d$. So as long as $d>(i-1)\cdot i$, we can take any $w_{i+1}\in T_i\setminus 
\rad_{\cB}(w_i)$, for which there exists $B_i\in \cB$ such that 
$\tr{w_{i+1}}B_iw_i\neq 0$. 

Let $T_{i+1}=(\span\{ B_jw_k \mid j\in[i], k\in[i+1]\})^\perp$, and 
$W_{i+1}=\span\{ w_1, \dots, w_{i+1}\}$. We claim that $T_{i+1}\cap 
W_{i+1}=0$. If not, suppose 
$w=\alpha_1w_1+\dots+\alpha_iw_i+\alpha_{i+1}w_{i+1}\in 
T_{i+1}$. Let $j$ be the smallest integer such that $\alpha_j\neq 0$. If $j\leq 
i$, then $\tr{w}B_jw_{j+1}$ is non-zero. If $j=i+1$, then $\tr{w}B_iw_i$ is 
non-zero. In 
either case, this is a contradiction to the assumption that $w\in T_{i+1}$. We 
also note that, by examining $\tr{w_{i+1}}B_iw_i$, we have $B_i\not\in\span\{ B_1, 
\dots, B_{i-1}\}$. 

We perform the above operations, and after the $t'$-th round we get the desired 
$w_1, 
\dots, w_{t'+1}$ and $B_1, \dots, B_{t'}\in\Lambda(n', \F)$. This requires 
$d>(t'-1)t'$, which is 
fine as we have set $d=t'^2$. 

Let $Q=\begin{bmatrix} w_1 & \dots & 
w_{t'+1}\end{bmatrix}\in\M(n'\times(t'+1),\F)$, 
and let $\cC=\tr{Q}\cB Q\leq\Lambda(t'+1, \F)$. We claim that $\cC$ satisfies the 
requirements of this lemma. Recall that for any $i\in[t']$, $B_i$ satisfies that 
$\tr{w_i}B_iw_{i+1}\neq 0$. Furthermore, for any $i+1<\ell\leq t+1$ and $1\leq 
k\leq 
t+1$, $\tr{w_k}B_iw_\ell=0$. Set $\tr{w_i}B_iw_{i+1}=\alpha_i$. Let
$C_i=\tr{Q}(\frac{1}{\alpha_i} B_i)Q\in \cC$. Then $C_i(i, 
i+1)=\frac{1}{\alpha_i}\tr{w_i}B_iw_{i+1}=1$, and for any $i+1<\ell\leq t+1$ and 
$1\leq k\leq t+1$, $C_i(k, \ell)=\frac{1}{\alpha_i}\tr{w_k}B_iw_\ell=0$. 
Intuitively, this just 
means 
that $C_i$ is of the form $\begin{bmatrix}
\tilde C_i & \vzero\\
\vzero & \vzero
\end{bmatrix}$, where $\tilde C_i$ is of size $(i+1)\times (i+1)$, and $C_i(i, 
i+1)=1$. 
\end{proof}

From Step 1, we have $\cB\leq\Lambda(n', \F)$ with 
$n'\geq 
2d$ and $\delta(\cB)\geq d$. Recall that $d=t^4$, and set $t'=t^2$. Applying 
Lemma~\ref{lem:step2} to $\cB$, $d$, $t'$ produces $Q_2\in\M(n'\times(t'+1), \F)$ 
which achieves the objective of this step. 

\subsection{Step 3} The objective of this step is fulfilled by the following 
lemma. 
\begin{lemma}\label{lem:step3}
Let $t'=t^2$, and $r=t+\binom{t}{2}$. 
Suppose $\cC\leq 
\Lambda(t'+1, \F)$ contains matrices $C_1, \dots, C_{t'}\in \cC$, where
$C_i=\begin{bmatrix}
\tilde C_i & \vzero \\
\vzero & \vzero 
\end{bmatrix}$, $\tilde C_i$ has size $(i+1)\times (i+1)$, and $C_i(i, 
i+1)=1$.
Then there exists $Q\in\GL(t'+1, \F)$, such that $\cD=\tr{Q}\cC Q\leq 
\Lambda(t'+1, 
\F)$ contains 
$D_1, 
\dots, D_r\in \Lambda(t'+1, \F)$ satisfying (1) for $i\in [t]$, 
$D_i=\begin{bmatrix}
\tilde D_i & \vzero \\
\vzero & \vzero 
\end{bmatrix}$ where $\tilde D_i\in \Lambda(i+1, \F)$ and $D_i(i, i+1)=1$, 
and 
(2) for 
$i\in[\binom{t}{2}]$, $D_{t+i}$ is of the form
\begin{equation}\label{eq:form_of_D}
\begin{bmatrix}
\tilde D_{t+i} & \vzero & \vzero & \vzero & \dots & \vzero \\
\vzero & 0 & 1 & 0 & \dots & 0 \\
\vzero & -1 & 0 & 0 & \dots & 0 \\
\vzero & 0 & 0 & 0 & \dots & 0 \\
\vdots & \vdots & \vdots & \vdots & \ddots & \vdots \\
\vzero & 0 & 0 & 0 & \dots & 0 
\end{bmatrix}, 
\end{equation}
where $\tilde D_{t+i}\in\Lambda(t+1+2(i-1), \F)$. 
\end{lemma}

That is, in the matrix $D_{t+i}$, the only nonzero 
entries 
of the $(t+2i)$th and $(t+2i+1)$th rows and columns are at the $(t+2i, t+2i+1)$ 
and $(t+2i+1, t+2i)$ positions. The reason for imposing this condition will be 
clear later from Observation~\ref{obs:key}.

\begin{proof}[Proof of Lemma~\ref{lem:step3}]
Observe that the $(t+2i, t+2i+1)$ and 
$(t+2i+1, t+2i)$ entries of $C_{t+2i}$ are $1$ and $-1$, respectively. So we can 
put $C_{t+2i}$ in the desired form, by multiplying appropriate elementary 
matrices  
(i.e. $I+\alpha\cdot E_{t+2i, j}$ for $j<t+2i$ and appropriate $\alpha\in\F$) on 
the left and their transposes on the right, 
to set other entries on the $(t+2i+1)$th row and 
column to be $0$. 
Some 
care is required to ensure that during 
this process, other $C_{t+2j}$'s, if they are already in this form, are not 
affected. 

Therefore, we apply appropriate matrices to 
$C_{t+2i}$, 
for $i$ in a \emph{decreasing} order, namely starting from $i=\binom{t}{2}$ and 
then 
going to $i=1$. To see that this does not affect those $C_{t+2j}$'s which were 
already in this form, let us examine $C_{t+2i}$, $C_{t+2i+1}$, and $C_{t+2i+2}$, 
when we put 
$C_{t+2i}$ into the form as in (\ref{eq:form_of_D}). Note that $C_{t+2i+2}$ 
has 
been put into the desired form as in (\ref{eq:form_of_D}). That is, 
$${\small
C_{t+2i}=\begin{bmatrix}
\hat C_{t+2i} & \vstar & \vstar & \vzero & \vzero& \dots & \vzero \\
\vstar & 0 & 1 & 0 & 0 & \dots & 0 \\
\vstar & -1 & 0 & 0 & 0 & \dots & 0 \\
\vzero & 0 & 0 & 0 & 0 & \dots & 0 \\
\vzero & 0 & 0 & 0 & 0 & \dots & 0 \\
\vdots & \vdots & \vdots & \vdots &\vdots & \ddots & \vdots \\
\vzero & 0 & 0 & 0 & 0 &\dots & 0 
\end{bmatrix},}$$
$${\small
C_{t+2i+1}=\begin{bmatrix}
\hat C_{t+2i+1} & \vstar & \vstar &  \vstar & \vzero & \dots & \vzero \\
\vstar & 0 & * & * & 0 &\dots & 0 \\
\vstar & * & 0 & 1 & 0 & \dots & 0 \\
\vstar & * & -1 & 0 & 0 & \dots & 0 \\
\vzero & 0 & 0 & 0 & 0 & \dots & 0 \\
\vdots & \vdots & \vdots & \vdots &\vdots & \ddots & \vdots \\
\vzero & 0 & 0 & 0 & 0 &\dots & 0 
\end{bmatrix},}$$
$${\small 
C_{t+2i+2}=\begin{bmatrix}
\hat C_{t+2i+2} & \vstar & \vstar & \vzero & \vzero & \dots & \vzero \\
\vstar & 0 & * & 0 & 0 & \dots & 0 \\
\vstar & * & 0 & 0 & 0 &\dots & 0 \\
\vzero & 0 & 0 & 0 & 1 &\dots & 0 \\
\vzero & 0 & 0 & -1 & 0 &\dots & 0 \\
\vdots & \vdots & \vdots & \vdots & \vdots &\ddots & \vdots \\
\vzero & 0 & 0 & 0 & 0 &\dots & 0 
\end{bmatrix},}
$$
where $\hat C_{t+2i}$, $\hat C_{t+2i+1}$, and $\hat C_{t+2i+2}$ are in 
$\Lambda(s+2i-1, \F)$.
From the above, when we use $(t+2i, t+2i+1)$ and $(t+2i+1, t+2i)$ entries  
to set other entries on the $(t+2i)$th and $(t+2i+1)$th rows and columns in 
$C_{t+2i}$ to be zero, such operations
do not affect the $(t+2i+2)$th and $(t+2i+3)$th rows and columns of 
$C_{t+2i+2}$, nor the $(t+2j)$th and $(t+2j+1)$th rows and columns of $C_{t+2j}$ 
for $j\geq i+1$ in general. Furthermore, such operations do not change $C_{t+2j}$ 
for $j\leq i-1$ at all. It follows that after these operations, all $C_{t+2i}$, 
$i\in[\binom{t}{2}]$, are in the form of (\ref{eq:form_of_D}) as desired.


Suppose $(C_1, \dots, C_{t'})$ are changed to $(C_1', 
\dots, 
C_{t'}')$ after these operations, which implicitly define $Q\in\GL(t'+1, \F)$. We 
then do the following. For $i\in[t]$, let $D_i=C_i'$. For 
$i\in[\binom{t}{2}]$, let $D_{t+i}=C_{t+2i}'$. We then obtain 
$r=t+\binom{t}{2}=\binom{t+1}{2}$ matrices 
$D_1, \dots, D_r\in\Lambda(t'+1, \F)$ with $t'=t^2$ which satisfy the properties 
as required by this lemma, concluding the proof.
\end{proof}


\subsection{Step 4} To start with, we need an observation on complete 
spaces as follows. 
Given an $m$-dimensional $\cA\leq\Lambda(n, \F)$, let
$\bA=(A_1, 
\dots, A_m)\in \Lambda(n, \F)^m$ be an ordered basis of $\cA$. 
From $\bA$, we construct a $3$-way 
array $\tA\in\M(n\times n\times m, \F)$ whose frontal slices are $A_i$'s. Let 
$f_{i,j}\in \F^m$, $i, j\in[n]$, be the tube fibres of $\tA$. Let
$W=\span\{ e_1, \dots, e_t\}\leq \F^n$, where $e_i$'s are standard basis 
vectors. We then note the following characterisation of $W$ to be a complete space 
of $\cA$.
\begin{observation}\label{obs:tube}
Let $\cA$, $f_{i,j}$, and $W$ be as above. Then $W$ is a complete space of 
$\cA$, 
if and only 
if, $f_{i,j}$, 
$1\leq i<j\leq t$, are linearly independent. 
\end{observation}

The following lemma completes Step 4. 
\begin{lemma}\label{lem:step4}
Let $t'=t^2$, and $r=t+\binom{t}{2}$.
Suppose $\cD\leq \Lambda(t'+1, 
\F)$ contains 
$D_1, 
\dots, D_r\in \Lambda(t'+1, \F)$, such that (1) for $i\in [t]$, 
$D_i=\begin{bmatrix}
\tilde D_i & \vzero \\
\vzero & \vzero 
\end{bmatrix}$ where $\tilde D_i\in \Lambda(i+1, \F)$ and $D_i(i, i+1)=1$, 
and 
(2) for 
$i\in[\binom{t}{2}]$, $D_{t+i}$ is of the form
$i\in[\binom{t}{2}]$, $D_{t+i}=\begin{bmatrix}
\tilde D_{t+i} & \vzero & \vzero & \vzero \\
\vzero & 0 & 1 & \vzero \\
\vzero & -1 & 0 & \vzero \\
\vzero & \vzero & \vzero & \vzero 
\end{bmatrix}$,
where $\tilde D_{t+i}\in\Lambda(t+1+2(i-1), \F)$. Then there exists $Q\in\GL(t'+1, 
\F)$, such that $W=\span\{ e_1, \dots, e_{t+1}\}$ is a complete space for 
$\tr{Q}\cD Q$. 
\end{lemma}

\begin{proof}
Construct a $3$-way array $\tD$ of size $(t'+1)\times(t'+1)\times r$, where the 
$i$th frontal slice is $D_i$. Let $f_{i,j}\in\F^r$ be the $(i, j)$th tube fibre of 
$\tD$. Note that the tube fibres $f_{1, 2}, f_{2,3}, \dots, f_{t, t+1}$ and 
$f_{t+2, t+3}, f_{t+4, t+5}, \dots, f_{t', t'+1}$ are linearly 
independent.

Let us arrange the tube fibres $f_{i,j}$, $1\leq i\leq t-1$, $i+2\leq j\leq t$, in 
the reverse lexicographic order, and relabel them accordingly as $\tilde f_k$ for 
$k\in[\binom{t}{2}]$. That is, $\tilde f_1=f_{1,3}$, $\tilde f_2=f_{1,4}$, $\tilde 
f_3=f_{2, 4}$, $\tilde f_4=f_{1, 5}$, $\tilde f_5=f_{2,5}$, and so on.

Our goal is to apply appropriate elementary matrices (on the left and their 
transposes on the right) to $\tD$ to make the tube 
fibres at positions $(k, \ell)$, $1\leq k<\ell\leq t+1$, linearly independent.
If this could 
be achieved, Observation~\ref{obs:tube} ensures that $W=\span\{ e_1, \dots,
e_{t+1}\}$ is a complete space of the resulting alternating matrix space.

To do that, consider the following operations in $\binom{t}{2}$ rounds. After the 
$i$th round, we wish to maintain that 
$$f_{1, 2}, f_{2,3}, \dots, f_{t, t+1}, \tilde f_1, 
\dots, \tilde f_i, f_{t+2(i+1), t+2i+3}, f_{t+2(i+2), t+2i+5}, \dots, f_{t', 
t'+1}$$
are linearly 
independent. Note that $t'=t+2\cdot \binom{t}{2}$. If this could be achieved, 
after $\binom{t}{2}$ rounds, 
$$f_{1,2}, f_{2,3}, \dots, f_{t, t+1}, \tilde f_1, \dots, \tilde 
f_{\binom{t}{2}}$$ 
would be linearly independent.

Recall that before the first round starts, we have that the tube fibres $f_{1, 
2}$, 
$f_{2,3}$, $\dots$, $f_{t, t+1}$ and 
$f_{t+2, t+3}, f_{t+4, t+5}, \dots, f_{t', t'+1}$ are linearly 
independent. 

Suppose now we have completed the $i$th round. Let us explain the operations in 
the 
$(i+1)$th round. We first check if 
$$f_{1, 
2}, f_{2,3}, \dots, f_{t, t+1}, \tilde f_1, 
\dots, \tilde f_i, \tilde f_{i+1}, f_{s+2(i+2), s+2i+5}, 
f_{s+2(i+3), s+2i+7}, \dots, f_{t', 
t'+1},$$ 
are linearly independent. 

If so, we proceed to the next round. 

If not, we have that $\tilde f_{i+1}$ is in the linear span of 
$$f_{1, 
2}, f_{2,3}, \dots, f_{t, t+1}, \tilde f_1, 
\dots, \tilde f_i, f_{t+2(i+2), t+2i+5}, 
f_{t+2(i+3), t+2i+7}, \dots, f_{t', 
t'+1}.$$ 
Now we 
wish to add $f_{t+2(i+1), t+2i+3}$ to 
$\tilde f_{i+1}$. This is because, since after the $i$th round we had that
\begin{equation}\label{eq:cond}
f_{1, 2}, f_{2,3}, 
\dots, f_{t, t+1}, \tilde f_1, 
\dots, \tilde f_i, f_{t+2(i+1), t+2i+3}, f_{t+2(i+2), t+2i+5}, \dots, f_{t', 
t'+1},
\end{equation}
are linearly independent, we have that 
\begin{equation}\label{eq:desired}
f_{1, 2}, f_{2,3}, 
\dots, f_{t, t+1}, \tilde f_1, 
\dots, \tilde f_i, \tilde f_{i+1}+f_{t+2(i+1), t+2i+3}, f_{t+2(i+2), t+2i+5}, 
\dots, f_{t', 
t'+1},
\end{equation}
are also linearly independent. 

But we cannot add $f_{t+2(i+1), t+2i+3}$ to 
$\tilde f_{i+1}$ directly. Indeed, the legitimate operations are left multiplying  
elementary matrices (namely $I+E_{i,j}$) and right multiplying 
their transposes. 
These 
correspond to performing row and column operations on $\tD$ viewed as a matrix 
$(f_{i,j})_{i,j\in[t'+1]}$ whose entries are vectors. We will make use of this 
perspective in the following. 

Suppose $\tilde f_{i+1}$ corresponds to 
$f_{j, k}$ for 
some $1\leq j<k\leq t+1$. In order to add $f_{t+2(i+1), t+2i+3}$ to 
$\tilde f_{i+1}$, we can first add the $(t+2(i+1))$th row to the $j$th row, and to 
maintain the alternating property, add 
the $t+2(i+1)$th column to the $j$th column. Then we add the $(t+(2i+3))$th column 
to the $k$th column, and to maintain the alternating property, add the 
$(t+(2i+3))$th row to the $k$th row. This does add $f_{t+2(i+1), t+2i+3}$ to 
$\tilde f_{i+1}$. 

However, it is 
possible that during the above procedure, some of the vectors in 
$$\{f_{1, 
2}, f_{2,3}, \dots, f_{t, t+1}, \tilde f_1, 
\dots, \tilde f_i\}$$ get altered as well. 
(It is easy to see that $f_{t+2(i+2), t+2i+5}, \dots, f_{t', t'+1}$ are not 
changed.)
For example, when adding the $(t+2(i+1))$th row to the $j$th row, $f_{j, j+1}$ and 
those $\tilde f_{i'}$ corresponding to $f_{j, j'}$ could be added by certain 
vectors as 
well. 
Therefore, instead of getting those vectors in (\ref{eq:desired}), we 
 get
\begin{multline}\label{eq:actual}
 f_{1, 2}+g_{1,2}, f_{2,3}+g_{2,3}, 
\dots, f_{t, t+1}+g_{t,t+1}, \tilde f_1+\tilde g_1, 
\dots, \tilde f_i+\tilde g_i, \\
 \tilde f_{i+1}+f_{t+2(i+1), t+2i+3}+\tilde g_{i+1}, f_{t+2(i+2), t+2i+5}, \dots, 
 f_{t', t'+1},
\end{multline}
where $g_{j,j+1}$ and $\tilde g_k\in \F^r$. 

Therefore, we need to show that those 
vectors in (\ref{eq:actual}) are linearly independent. The following 
observation is crucial for this. 
\begin{observation}\label{obs:key}
We have that $g_{j, j+1}$ and $\tilde 
g_k$ are in $\span\{f_{t+2(i+2), t+2i+5}$, $\dots$, $f_{t', t'+1}\}$. 
\end{observation}
\begin{proof}
Note that $g_{j, j+1}$ and $\tilde g_k$ come from those fibre tubes $f_{p, 
t+2(i+1)}$ and $f_{q, t+2i+3}$, where $1\leq p, q\leq t+1$. As can be seen from 
(\ref{eq:form_of_D}), 
these fibre tubes are in the linear span of $f_{t+2(i+2), t+2i+5}, \dots, f_{t', 
t'+1}$, because the only non-zero entries on the $(t+2i+2)$th and $(t+2i+3)$th 
columns of $D_{t+i+1}$ are in the $(t+2i+3, t+2i+2)$ and $(t+2i+2, t+2i+3)$th 
positions.
\end{proof}

Given 
this observation, the linear independence of vectors in (\ref{eq:actual})
follows from the linear independence of vectors in (\ref{eq:cond}). Indeed, 
by a change of basis, we can assume that the vectors in (\ref{eq:cond}) 
form a set of standard basis vectors in the order they are listed. So putting 
those vectors in (\ref{eq:cond}) as 
column vectors in  
a matrix form simply gives
$$
\begin{bmatrix}
I_{t+i} & \vzero &\vzero \\
\vzero & 1 & \vzero \\
\vzero & \vzero & I_{\binom{t+1}{2}-(t+i+1)}
\end{bmatrix},
$$
where $I_k$ denotes the $k\times k$ identity 
matrix. Now putting the vectors in (\ref{eq:actual}) as column vectors in a 
matrix gives
$$
\begin{bmatrix}
I_{t+i} & {\bf *} &\vzero \\
\vzero & 1 & \vzero \\
\mathbf{*} & * & I_{\binom{t+1}{2}-(t+i+1)}
\end{bmatrix},
$$
where $\mathbf{*}$ means that the entries could be arbitrary. This matrix is 
clearly of full-rank, proving the linear independence of vectors in 
(\ref{eq:actual}). Note that the 
entries in the lower-left submatrix comes from $g_{i, i+1}$ and $\tilde g_j$, and 
the entries in the $(t+i+1)$-th column comes from $f_{t+2(i+1), t+(2i+3)}+\tilde 
g_{i+1}$. This 
also explains the necessity of Step 3, as otherwise the 
upper-left $(t+i+1)\times(t+i+1)$ submatrix would be of the form
$\begin{bmatrix}
I_{t+i} & {\bf *}\\
{\bf *} & 1
\end{bmatrix}$, which could be not full-rank.

Now that we achieved what we wanted in the $(i+1)$th round, also note that the 
above operations do not 
affect the $2j$ and $2j+1$ rows and columns of $D_{t+j}$ for $j>i+1$. This means 
that we can have the same set up to perform the above operations (with increased 
row and column indices) in the next round. This concludes the proof of 
Lemma~\ref{lem:step4}.
\end{proof}

This 
concludes the proof of Theorem~\ref{thm:main}.\hfill\qed

\section{Applications of Theorem~\ref{thm:main}}\label{sec:app}

To explain the applications of Theorem~\ref{thm:main} in group theory and 
geometry, we recast Theorem~\ref{thm:main} 
in terms of alternating bilinear maps, which follows easily by the relationship 
between alternating matrix spaces and alternating bilinear maps explained in 
Section~\ref{subsec:rel}.

Let $\phi:\F^n\times\F^n\to\F^m$ be an alternating bilinear map. Let 
$W\leq \F^n$ be a subspace. We say that $W$ is a \emph{totally-isotropic} space 
for $\phi$, if 
$\phi(W, W)=0$. We say that $W$ is a \emph{complete space} for $\phi$, if 
$\dim(\span(\phi(W, W)))=\binom{\dim(W)}{2}$. 
\begin{theorem}[Theorem~\ref{thm:main} for alternating bilinear 
maps]\label{thm:main2}
Let $s, t\in \N$, $s, t\geq 2$. Let $\phi: \F^n\times \F^n\to \F^m$ be an 
alternating 
bilinear 
map over $\F$, where 
$n\geq s\cdot t^4$. Then $\phi$ has either a dimension-$s$ totally-isotropic 
space, or a dimension-$t$ complete space. 
\end{theorem}

\subsection{Implications and new questions in group 
theory}\label{subsec:implications}

Let $p$ be an odd prime. Let $\fB_{p,2}$ be the class of 
finite $p$-groups of 
class $2$ and exponent $p$. That is, a finite group $G$ is in $\fB_{p,2}$, if the 
commutator subgroup $[G, G]$ is contained in the centre 
$\mathrm{Z}(G)$, and every $g\in G$ satisfies that $g^p=\id$. We also define 
$\fB_{p,2,n}\subseteq \fB_{p,2}$, such that $G\in \fB_{p,2}$ is in $\fB_{p,2,n}$ 
if and only if a minimal generating set of $G$ is of size $n$, or equivalently, if 
$G/[G,G]\cong \Z_p^n$. 

There are two important group families in $\fB_{p,2}$. First, elementary abelian 
$p$-groups, $\Z_p^s$, are in $\fB_{p,2}$. Second, for any $t\in \N$, 
there 
is $F_{p,2,t}$, the \emph{relatively free} $p$-groups of class $2$ and exponent 
$p$ with 
$t$ generators, defined as the quotient of the 
free group in $t$ generators by the subgroup generated by all words of the form 
$x^p$ and $[[x, y], z]$. Note that $F_{p,2,t}$ can be viewed as a universal group 
in 
$\fB_{p,2,t}$, in that any group in $\fB_{p,2,t}$ is isomorphic to the
quotient of $F_{p,2,t}$ by a subgroup of $[F_{p,2,t}, F_{p,2,t}]$.

Baer's correspondence \cite{Bae38} connects $\fB_{p,2}$ 
with alternating bilinear maps over 
$\F_p$. Indeed, this correspondence leads to an isomorphism between the categories 
of groups in $\fB_{p,2}$ and of alternating bilinear maps over $\F_p$ (cf. 
\cite[Sec. 3]{Wil09a}). It is then not surprising to see correspondences between 
structures of $\fB_{p,2}$ and of alternating bilinear maps over 
$\F_p$. Examples include abelian subgroups vs totally-isotropic spaces 
\cite{Alp65}, central decompositions vs orthogonal decompositions 
\cite{Wil09a,LQ19}, 
hyperbolic pairs vs totally-isotropic decompositions \cite{BMW17,BCG+19}. 
%

Theorem~\ref{thm:main2} has a natural interpretation in the context of $p$-groups 
of class $2$ and exponent $p$ as follows. 
\begin{corollary}\label{cor:groups}
Let $G\in\fB_{p,2,n}$, where $n\geq s\cdot t^4$. Then $G$ has either an abelian 
subgroup 
$S\leq G$ such that $S[G,G]/[G,G]\cong \Z_p^s$, or a subgroup isomorphic 
to $F_{p,2,t}$.
\end{corollary}
\begin{proof}
Let $G\in \fB_{p,2,n}$. Then $G/[G, G]\cong \Z_p^n$ and suppose $[G, G]\cong 
\Z_p^m$. The commutator map $[, ]$ induces an alternating 
bilinear map $\phi: G/[G,G]\times G/[G,G]\to [G,G]$. 

Given a subgroup $H\leq G/[G, G]$ such that $H\cong\Z_p^s$, let $S_H$ be a 
subgroup of $G$ of the smallest order satisfying $S_H[G,G]/[G,G]=H$. Then it is 
known, at least since \cite{Alp65}, that $H$ is a totally-isotropic space of 
$\phi$ if and only if $S_H$ is abelian. It is also straightforward to verify 
that $H$ is a complete space if and only if $S_H$ is isomorphic to $F_{p,2,s}$. 
From these, the corollary follows immediately from Theorem~\ref{thm:main2}.
\end{proof}

Readers familiar with varieties of groups \cite{Neu67} may recognise that abelian 
groups and relatively free groups are the two opposite structures in a variety of 
groups. In this sense, Corollary~\ref{cor:groups} may be viewed as a 
group-theoretic version of the classical Ramsey theorem for graphs \cite{Ram30}.

Corollary~\ref{cor:groups} also leads to the following family of 
questions. 
Recall that a 
variety of groups, $\fC$, is the class of all groups satisfying a set of laws.
Examples 
include abelian groups, nilpotent groups of class $c$, and solvable groups of 
class $c$.  
Let $\fC_t$ be the subclass of $\fC$, consisting of groups that
can be generated by $t$ elements. The relatively free 
group of rank $t$ in $\fC_t$, $F_{\fC, t}$, as the free group on $t$ generators 
modulo the 
laws defining $\fC$. For a group $G$ and $S\subseteq G$, let $\Phi(G)$ be the 
Frattini subgroup of $G$. Then the following Ramsey problem for $\fC$ can be 
formulated.
\begin{question}[Ramsey problem for a variety of groups $\fC$]\label{q:groups}
Let $G\in \fC_n$ and $s, t\in \N$. Is it true that, if $n>f_\fC(s, t)$ for some 
function 
$f_\fC:\N\times \N\to\N$, then 
either 
there exists an abelian subgroup $S\leq G$ such that $S\Phi(G)/\Phi(G)$ is of 
rank 
$s$, or $G$ has a subgroup isomorphic to $F_{\fC, t}$.
\end{question}
There are several deep Ramsey-type results 
for 
nilpotent groups \cite{Lei98,BL03,JR17}, mostly following the lines of the van der 
Waerden theorem \cite{Wae27} and the Hales-Jewett theorem \cite{HJ63}. 
Question~\ref{q:groups} asks to show the existence of large enough 
subgroups of certain types in a 
group from 
a variety of groups \cite{Neu67}, which is in a 
closer analogy with the graph Ramsey theory.

\subsection{Implications and questions for Grassmannians}\label{subsec:grass}
Theorem~\ref{thm:main2} 
can also be interpreted in the context of hyperplane sections of Grassmannians. To 
introduce this implication we need some further terminologies. 

Let $\phi:(\F^n)^{\times \ell}\to\F^m$ be an alternating 
$\ell$-linear map. We say 
that $V\leq\F^n$ is a \emph{totally-isotropic space}
of $\phi$, if for any $v_1, 
\dots, v_\ell\in V$, $\phi(v_1, \dots, v_\ell)=0$. 
We say that $W\leq\F^n$ of dimension $\geq \ell$ is an \emph{anisotropic space} of 
$\phi$, if for any linearly independent $w_1, \dots, 
w_\ell\in W$, $\phi(w_1, \dots, w_\ell)\neq 0$. 

As pointed out by Feldman and Propp \cite[Sec. 6]{FP92}, an alternating 
$\ell$-linear map $\phi:(\F^n)^{\ell}\to\F^m$ defines an $m$-fold hyperplane 
section $H$ on the 
Grassmannian $\Gr(n, \ell)$, the variety of $\ell$-dimensional subspaces of 
$\F^n$. For $W\leq \F^n$, $\Gr(W, \ell)$ is a subvariety of $\Gr(n, \ell)$. 
Feldman and Propp noted that $W$ is a totally-isotropic space if and only if 
$\Gr(W, \ell)$ is contained in $H$. It is also easy to see that $W$ is anisotropic 
if and only if $H$ intersects $\Gr(W, \ell)$ trivially. 

Let $\ell=2$, and note that a complete space is anisotropic. Due to the geometric 
interpretations of anisotropic spaces and totally-isotropic spaces explained 
above, 
Theorem~\ref{thm:main2} implies the following. 


\begin{corollary}\label{cor:geometry}
When $n\geq s\cdot t^4$, any $m$-fold hyperplane section of $\Gr(n, 2)$ either 
contains $\Gr(W, 2)$ for some $s$-dimensional $W\leq \F^n$, or intersects $\Gr(W, 
2)$ trivially 
for some $t$-dimensional $W\leq \F^n$. 
\end{corollary}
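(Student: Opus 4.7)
The plan is to translate the geometric statement into Theorem~\ref{thm:main} via the Plücker embedding $\Gr(n,2) \hookrightarrow \mathbb{P}(\bigwedge^2 \F^n)$, which sends a $2$-plane $L = \linspan(u,v) \leq \F^n$ to $[u \wedge v]$. Under this embedding, a hyperplane in $\mathbb{P}(\bigwedge^2 \F^n)$ is cut out by a linear functional on $\bigwedge^2 \F^n$, and such a functional is the same datum as an alternating bilinear form $\F^n \times \F^n \to \F$. Bundling $m$ hyperplanes together, an $m$-fold hyperplane section corresponds to an alternating bilinear map $\phi \colon \F^n \times \F^n \to \F^m$, with the key property that $L = \linspan(u,v)$ lies on the section if and only if $\phi(u,v) = 0$. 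This is the dictionary already exploited by Feldman and Propp~\cite{FP92}.

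Next I would check the two geometric conditions against the two extremal properties. The section contains $\Gr(W,2)$ iff $\phi(u,v) = 0$ for every $u,v \in W$, i.e.\ iff $\phi(W,W)=0$, which is precisely totally-isotropy of $W$. For the other side, I would use that the restriction $\phi|_{W \times W}$ factors through a linear map $\bar\phi \colon \bigwedge^2 W \to \F^m$, and completeness of $W$ says exactly that $\dim \mathrm{im}\,\bar\phi = \binom{\dim W}{2} = \dim \bigwedge^2 W$, i.e.\ $\bar\phi$ is injective. For any $L \in \Gr(W,2)$ with basis $u,v$, the Plücker image $u \wedge v$ is a nonzero element of $\bigwedge^2 W$, hence $\phi(u,v) = \bar\phi(u \wedge v) \neq 0$, so $L$ is not on the section; thus the intersection of $\Gr(W,2)$ with the section is empty.

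Finally, applying Theorem~\ref{thm:main} to $\phi$ with $n \ge s \cdot t^4$ produces either a dimension-$s$ totally-isotropic subspace $W$ (in which case the section contains $\Gr(W,2)$) or a dimension-$t$ complete subspace $W$ (in which case the section meets $\Gr(W,2)$ trivially). There is no genuine obstacle beyond setting up the Plücker dictionary correctly; the only mildly subtle point is the ``complete $\Rightarrow$ trivial intersection'' step, which only needs that no \emph{decomposable} $2$-vector in $\bigwedge^2 W$ lies in $\ker \bar\phi$, an immediate consequence of injectivity of $\bar\phi$ and therefore requires no additional argument about the Plücker quadrics.
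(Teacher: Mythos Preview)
Your proposal is correct and follows essentially the same approach as the paper: set up the Pl\"ucker/Feldman--Propp dictionary between $m$-fold hyperplane sections of $\Gr(n,2)$ and alternating bilinear maps $\phi:\F^n\times\F^n\to\F^m$, identify ``contains $\Gr(W,2)$'' with $W$ totally-isotropic and ``meets $\Gr(W,2)$ trivially'' with $W$ anisotropic, and then invoke Theorem~\ref{thm:main}. The only cosmetic difference is that the paper routes the second implication through the intermediate notion of an anisotropic space (noting that complete $\Rightarrow$ anisotropic, hence $\ARN_\F(s,t,2)\le\CRN_\F(s,t,2)$), whereas you argue ``complete $\Rightarrow$ $\bar\phi$ injective $\Rightarrow$ no decomposable bivector in $\ker\bar\phi$'' directly; these are the same argument unpacked slightly differently.
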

Note that Corollary~\ref{cor:geometry} deals with $m$-fold hyperplane sections of 
$\Gr(n, 2)$. This leads to the question as whether a similar statement for $\Gr(n, 
\ell)$, $\ell>2$, holds. 


\section{Discussions: Theorem~\ref{thm:main} as a linear Ramsey 
theorem}\label{sec:discussion}

In this section we discuss on an analogy between graphs and alternating bilinear 
maps, and more generally, between hypergraphs and alternating multilinear maps, in 
the context of extremal combinatorics. 

Through this analogy, Theorem~\ref{thm:main} can be naturally understood as a 
linear algebraic Ramsey theorem. A previous ``linear Ramsey theorem'' of Feldman 
and 
Propp 
\cite{FP92} could be more properly understood as a ``linear Tur\'an theorem.'' An 
intriguing open problem, which can be viewed as a linear Tur\'an's problem for 
hypergraphs, is proposed in Section~\ref{subsec:turan_problem}. 

We also present two small results. First, we present a result that 
complements  
Theorem~\ref{thm:main} (Proposition~\ref{prop:main}). Second, we generalise a 
correspondence between independent sets and totally-isotropic spaces in 
\cite{BCG+19} from graphs to hypergraphs (Proposition~\ref{prop:alpha}).

\subsection{Graphs and alternating matrix spaces} Following ideas traced back to 
Tutte \cite{Tut47} and Lov\'asz \cite{Lov79}, we construct an alternating 
matrix space from a graph.

Let $G=([n], E)$ be a simple, undirected graph. Suppose 
$E=\{\{i_1, j_1\}, \dots$, $\{i_m, j_m\}\}\subseteq\binom{[n]}{2}$, where for 
$k\in[m]$, $i_k<j_k$ and for 
$1\leq k<k'\leq m$, $(i_k, j_k)<(i_{k'}, j_{k'})$ in the 
lexicographic order. For $k\in[m]$, let $A_k$ be the
alternating matrix 
$A_{i_k, j_k}$ (defined in Section~\ref{sec:prel}) over $\F$, and 
set $\cA_G:=\span\{A_1, \dots, A_m\}\leq\Lambda(n, \F)$. 
%


The basic observation of Tutte and Lov\'asz is that $G$ has a perfect matching if 
and only if $\cA_G$ contains a full-rank matrix. 
This classical example is the precursor of several recent 
discoveries relating properties of $G$, including 
independent sets and vertex colourings \cite{BCG+19}, vertex and edge 
connectivities \cite{LQ19}, and isomorphism and homomorphism notions \cite{HQ20a}, 
with properties of $\cA_G$. 
Graph-theoretic questions and techniques have also been translated to study 
alternating matrix spaces or alternating bilinear maps in \cite{LQ17,BCG+19,Qia20} 
with 
applications to group theory and quantum information.

\subsection{Hypergraphs and alternating multilinear 
maps (and exterior algebras)}\label{subsec:hypergraph_alternating}
Naturally extending the 
construction of alternating matrix spaces from graphs, the following classical 
construction 
of
alternating $\ell$-linear maps from $\ell$-uniform hypergraphs is also traced 
back  to Lov\'asz \cite{Lov77}.

Given
$\{i_1, \dots, i_\ell\}\in \binom{[n]}{\ell}$ where $i_1<\dots<i_\ell$, there 
is the
alternating 
$\ell$-linear form $e_{i_1}^*\wedge \dots \wedge e_{i_\ell}^*$. From an 
$\ell$-uniform hypergraph $H=([n], E)$ where $E\subseteq\binom{[n]}{\ell}$ and 
$|E|=m$, we can construct an alternating $\ell$-linear map $\phi_H:(\F^n)^{\times 
\ell}\to\F^m$ by first applying the above alternating $\ell$-linear form 
construction to 
every hyperedge in $E$, and then ordering these forms by 
the lexicographic
ordering of $\binom{[n]}{\ell}$. 

Lov\'asz's construction was originally stated in 
terms of subspaces of exterior algebras. Since then, the use of exterior 
algebras has lead to 
the elegant extensions of Bollob\'as's Two Families Theorem \cite{Bol65} by Frankl 
\cite{Fra82}, Kalai \cite{Kal84} and Alon \cite{Alo85}, as well as Kalai's 
algebraic shifting method \cite{Kal02}. The recent work by Scott and Wilmer 
\cite{SW19} systematically extends several basic results from the extremal 
combinatorics of hypergraphs to subspaces of exterior algebras.

Subspaces of exterior algebras, linear spaces of alternating multilinear forms, 
and alternating 
multilinear maps are of course closely related. Indeed, our main result can be 
formulated in terms of exterior algebras, just as Feldman and Propp did for their 
main result in \cite[Corollary 2]{FP92}.

\subsection{Independent sets and totally-isotropic spaces} 
\label{subsec:iso_number}
Recall that for an 
$\ell$-uniform hypergraph $H=([n], E)$, $S\subseteq [n]$ is an \emph{independent 
set} in $H$, if 
$S$ does not contain any hyperedge from $E$. The \emph{independence number} of 
$H$, 
$\alpha(H)$, is the maximum size over all independent sets in $H$. 

Let $\phi:(\F^n)^{\times \ell}\to\F^m$ be an alternating $\ell$-linear map. 
Recall that totally-isotropic spaces of $\phi$ are defined in 
Section~\ref{subsec:grass}.
The 
\emph{totally-isotropic 
number} of 
$\phi$, $\alpha(\phi)$, is the maximum dimension over all totally-isotropic spaces 
of 
$\phi$. 

In \cite{BCG+19}, it is shown that when $\ell=2$, i.e. for a graph $G$, 
$\alpha(G)=\alpha(\phi_G)$. We generalise that result to any $\ell$ in the 
following proposition, which justifies viewing totally-isotropic spaces as a 
linear 
algebraic analogue of independent sets. 
\begin{proposition}\label{prop:alpha}
Let $H$ be an $\ell$-uniform hypergraph, and let $\phi_H$ be the alternating 
$\ell$-linear map constructed from $H$ as in 
Section~\ref{subsec:hypergraph_alternating}. Then we have 
$\alpha(H)=\alpha(\phi_H)$.
\end{proposition}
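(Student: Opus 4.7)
The plan is to establish the two inequalities $\alpha(H)\le\alpha(\phi_H)$ and $\alpha(\phi_H)\le\alpha(H)$ separately. Both directions revolve around the determinantal evaluation
$$(e_{i_1}^*\wedge\cdots\wedge e_{i_\ell}^*)(u_1,\dots,u_\ell)=\det\bigl((u_b)_{i_a}\bigr)_{a,b\in[\ell]}$$
together with a judicious choice of coordinate-aligned bases.

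For the direction $\alpha(H)\le\alpha(\phi_H)$, given an independent set $S\subseteq[n]$ in $H$, I would take the coordinate subspace $V_S=\langle e_i:i\in S\rangle$, which has dimension $|S|$. To verify that $V_S$ is totally-isotropic, multilinearity lets me reduce to checking the vanishing of each hyperedge-coordinate of $\phi_H(u_1,\dots,u_\ell)$ for $u_1,\dots,u_\ell\in V_S$. For any hyperedge $\{i_1,\dots,i_\ell\}\in E$, independence of $S$ forces some $i_j\notin S$, which makes the $j$-th row of the corresponding $\ell\times\ell$ matrix identically zero, so the determinant vanishes.

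For the reverse inequality, I would start from a totally-isotropic subspace $V\le\F^n$ of dimension $d$ and put it in reduced row echelon form. This produces pivot columns $p_1<\cdots<p_d$ and a distinguished basis $v_1,\dots,v_d$ of $V$ with the key property $(v_k)_{p_j}=\delta_{kj}$. Setting $S=\{p_1,\dots,p_d\}$ so that $|S|=d$, I would argue by contradiction that $S$ is independent in $H$: if some hyperedge $e=\{i_1<\cdots<i_\ell\}$ were contained in $S$, writing $i_j=p_{k_j}$ for indices $k_1<\cdots<k_\ell$, the $e$-coordinate of $\phi_H(v_{k_1},\dots,v_{k_\ell})$ would equal $\det(M)$ with $M_{a,b}=(v_{k_b})_{p_{k_a}}=\delta_{ab}$, hence equal to $1$, contradicting total isotropy of $V$.

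The second direction is the more substantive one, but I do not anticipate a genuine obstacle. Reduced row echelon form is precisely the device that converts a qualitative dimension statement about $V$ into an identity-matrix pattern on the pivot coordinates, which is exactly what is needed to force the relevant $\ell\times\ell$ minor of $\phi_H$ to be nonzero; the argument goes through uniformly over any field and needs no characteristic hypothesis.
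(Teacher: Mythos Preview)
Your proposal is correct and follows essentially the same approach as the paper: an independent set yields the coordinate totally-isotropic subspace, and from a totally-isotropic $V$ of dimension $d$ one extracts $d$ row indices on which some $\ell\times\ell$ minor is forced to be nonzero. The only cosmetic difference is that you use reduced row echelon form to pin down pivot indices (making the relevant minor equal to~$1$), whereas the paper picks any $d$ linearly independent rows of a matrix whose columns span $V$ and invokes $w_{i_1}\wedge\cdots\wedge w_{i_\ell}\neq 0$.
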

\begin{proof}
Let $H=([n], E)$, $E\subseteq\binom{[n]}{\ell}$. Let 
$\phi_H:(\F^n)^{\times\ell}\to\F^m$ be the alternating $\ell$-linear map 
constructed from $H$ via the construction in 
Section~\ref{subsec:hypergraph_alternating}. 

Suppose $S\subseteq[n]$ is an 
independent set of $H$. Let $V=\span\{ e_i \mid i\in S\}\leq \F^n$. It is 
easy to 
verify that $V$ is a totally-isotropic space of $\phi_H$. It follows that 
$\alpha(H)\geq \alpha(\phi_H)$.

Suppose $V\leq \F^n$ is a totally-isotropic space of $\phi_H$ of dimension $d$. 
Let $B\in\M(n\times d, \F)$ be a matrix whose columns span $V$. For 
$i\in[n]$, let 
$w_i\in \F^d$ such that $\tr{w_i}$ is the $i$th row of $B$. As $\rk(B)=d$, there 
exists $i_1, \dots, i_d$, $1\leq i_1<\dots<i_d\leq n$, such that $w_{i_j}$, 
$j\in[d]$, are linearly independent. 

We claim that $\{i_1, \dots, i_d\}\in[n]$ is 
an independent set of $H$. If not, by relabelling the vertices, we can assume that 
$\{i_1, \dots, i_\ell\}\in E$. As $V$ is a totally-isotropic space, we have 
$e_{i_1}^*\wedge \dots \wedge e_{i_\ell}^*$, when restricted to $V$, is the zero 
map. It follows that $w_{i_1}\wedge \dots \wedge w_{i_\ell}=0$, contradicting that 
$w_{i_1}, \dots, w_{i_\ell}$ are linearly independent. The claim is proved. 

We then derive that $\alpha(\phi_H)\geq \alpha(H)$, concluding the proof. 
\end{proof}


\subsection{Three numbers related to Tur\'an's theorem}\label{subsec:three}

For an $\ell$-uniform hypergraph $H=([n], E)$, 
the size of $H$ is denoted as $\size(H)=|E|$. We define three closely related 
numbers regarding 
$n$, $\size(H)$, $\ell$, and the independence number $\alpha(H)$. The first number 
is just 
the celebrated Tur\'an number for hypergraphs \cite{Tur61,Sid95}. The other two 
numbers originate from \cite{BGH87} and \cite{FP92}, respectively, in the context 
of alternating multilinear maps. 

Let $n, m, \ell, a\in \N$. 
The \emph{Tur\'an 
number} is 
$$\TN(n, a, \ell)=\min\{\size(H) \mid H \text{ is }\ell\text{-uniform, } 
n\text{-vertex hypergraph, and }\alpha(H)\leq a\}.$$

The \emph{Feldman-Propp number} is 
$$
\FP(a, m, \ell)=\min\{n\in \N \mid \forall \ell\text{-uniform, } 
n\text{-vertex, 
} m\text{-edge hypergraph }H, \alpha(H) > a\}.
$$

We also define the number 
$$\alpha(n, m, \ell)=\min\{\alpha(H) 
\mid  H \text{ is }\ell\text{-uniform, } 
n\text{-vertex, 
} m\text{-edge hypergraph}\}.$$ 

It is easy to see the relations of these three numbers, $\alpha(n, m, \ell)$, 
$\TN(n, a, \ell)$, and 
$\FP(a, m, \ell)$. On the one hand, $\TN(n, a, \ell)\leq 
m$ implies the existence of some $n$-vertex, $m$-edge, $\ell$-uniform hypergraph 
$H$ with $\alpha(H)\leq a$, which in turn implies that $\alpha(n, m, \ell)\leq a$ 
and $\FP(a, m, \ell)>n$. On the other hand, $\TN(n, a, \ell)>m$ implies that for 
any 
$n$-vertex, $m$-edge, $\ell$-uniform hypergraph $H$, $\alpha(H)>a$, which in turn 
implies that $\alpha(n, m, \ell)>a$ and $\FP(a, m, \ell)\leq n$. 

In the alternating multilinear map setting, by replacing independence numbers with 
totally-isotropic numbers defined in Section~\ref{subsec:iso_number}, we can 
define $\alpha_\F(n, m, \ell)$, $\TN_\F(n, a, \ell)$, and 
$\FP_\F(a, m, \ell)$ for those alternating multilinear maps $\phi:(\F^n)^{\times 
\ell}\to\F^m$ with $\alpha(\phi)=a$. 
Note that an immediate consequence of Proposition~\ref{prop:alpha} is that 
$\alpha_\F(n, m, 
\ell)\leq \alpha(n, m, \ell)$.

\subsection{Tur\'an meets Buhler, Gupta, and Harris} \label{subsec:turan_bgh}
The celebrated Tur\'an's 
theorem \cite{Tur41} is a cornerstone 
of extremal graph theory \cite{Bol04}. 
Formulated in terms of independent sets, 
 Tur\'an's theorem gives that
\begin{equation}\label{eq:turan}
\alpha(n, m, 2)\geq \Big\lceil
\frac{n^2}{2m+n} \Big\rceil,
\end{equation}
where the equality can be achieved.
%

In the alternating multilinear map setting, 
the quantity $\alpha_\F(n, m, 2)$
has
been studied by Buhler, Gupta, and Harris \cite{BGH87}. The main result of 
\cite{BGH87} states that
for any $m>1$, we have 
\begin{equation}\label{eq:bgh}
\alpha_\F(n, m, 2)\leq \Big\lfloor \frac{m+2n}{m+2}\Big\rfloor,
\end{equation}
where the equality is attainable over any\footnote{While 
in \cite{BGH87} the main result was stated for fields of characteristic $\neq 2$, 
the 
proof works for any characteristic.} algebraically closed\footnote{For fields 
that are not  
algebraically closed, the equality may not be achieved. See \cite[Sec. 3]{BGH87} 
and \cite{GQ06}.} field. The inequality was also 
obtained earlier by Ol'shanskii \cite{Ols78} over finite fields. This 
allows us to show the following result that complements Theorem~\ref{thm:main}.

\begin{proposition}\label{prop:main}
There exists an alternating bilinear map $\phi: \F^n\times \F^n\to \F^m$, 
$n=\Theta(s\cdot 
t^2)$, 
such that $\phi$ has neither a dimension-$s$ totally-isotropic space, nor a 
dimension-$t$ complete space. 
\end{proposition}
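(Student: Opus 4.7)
The plan is to decouple the two obstructions by choosing the codomain dimension $m$ just small enough that dimension-$t$ complete spaces cannot fit even in principle, and then invoking the tight construction behind Equation~\ref{eq:bgh} to kill dimension-$s$ totally-isotropic spaces. Concretely, I would set $m = \binom{t}{2} - 1$. For any subspace $W \leq \F^n$ with $\dim(W) = t$, one has $\dim(\phi(W, W)) \leq m < \binom{t}{2}$, so $W$ cannot be a complete space. This disposes of the complete-space side for free, independent of which $\phi$ is eventually chosen, and it is what makes the ``$t^2$'' in $n = \Theta(s \cdot t^2)$ appear.

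For the totally-isotropic side, I would invoke the equality case of Equation~\ref{eq:bgh}, due to Ol'shanskii \cite{Ols78} (finite fields of adequate size) and Buhler--Gupta--Harris \cite{BGH87} (algebraically closed fields): there exists $\phi : \F^n \times \F^n \to \F^m$ with $\alpha_\F(\phi) = \lfloor (m + 2n)/(m + 2) \rfloor$. To force $\alpha_\F(\phi) \leq s - 1$ it suffices to require
\[
\frac{m + 2n}{m + 2} \;<\; s, \qquad \text{i.e.,} \qquad n \;<\; \frac{(s-2)m + 2(s-1)}{2}.
\]
Taking $n$ to be the largest integer strictly below this bound and substituting $m = \binom{t}{2} - 1$ yields $n = \Theta(s \cdot t^2)$, matching the asserted growth rate. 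The resulting $\phi$ then has no dimension-$s$ totally-isotropic space by BGH, and no dimension-$t$ complete space by the choice of $m$.

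The argument is essentially a repackaging of \cite{Ols78,BGH87}, so there is no real obstacle in the combinatorial content; the only care needed is about the underlying field. If $\F$ is algebraically closed or a sufficiently large finite field, the equality case of Equation~\ref{eq:bgh} supplies the required $\phi$ directly. For a smaller or non-closed $\F$ one must either pass to an extension of $\F$ or cite the finite-field form of Ol'shanskii's construction; this is the one place where a careful sentence is warranted, and I would handle it by stating the proposition explicitly over such $\F$ and remarking that the $\Theta(s t^2)$ rate is unaffected.
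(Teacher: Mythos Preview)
Your approach is essentially the same as the paper's: choose $m$ just below $\binom{t}{2}$ to preclude dimension-$t$ complete spaces for free, then invoke the Ol'shanskii/Buhler--Gupta--Harris bound to obtain $\phi$ with $\alpha(\phi)\le s-1$, yielding $n=\Theta(st^2)$. One small simplification: you only need the \emph{upper bound} $\alpha_\F(n,m,2)\le\lfloor(m+2n)/(m+2)\rfloor$ (which asserts existence of some $\phi$ with $\alpha(\phi)$ at most this), not the equality case, and since that inequality holds over any field your caveat about the underlying field is unnecessary.
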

\begin{proof}
Let $m=\lfloor \binom{t-1}{2}\rfloor$. Let 
$n=\lfloor\frac{(m+2)(s-2)}{2}\rfloor+1$, which implies that $\frac{m+2n}{m+2}\leq 
s-1$. Note that $n= \Theta(s\cdot m)= \Theta(s\cdot t^2)$.

By Ol'shanskii (\cite[Lemma 2]{Ols78}) and 
Buhler, Gupta, and 
Harris (\cite[Main Theorem]{BGH87}), $\alpha_\F(n, m, 2)\leq \lfloor 
\frac{m+2n}{m+2}\rfloor$. It follows that there exists 
$\phi:(\F^n)^{\times\ell}\to\F^m$ with $\alpha(\phi)\leq s-1$. Furthermore, by 
$m=\lfloor \binom{t-1}{2} \rfloor$, any complete spaces of $\phi$ is of dimension 
$\leq 
t-1$. The result then follows.
\end{proof}

Closing the gap 
between 
$\Omega(s\cdot t^2)$ from Proposition~\ref{prop:main}, and $s\cdot t^4$ from 
Theorem~\ref{thm:main}, is an interesting open problem.

\begin{remark}\label{rem:compare}
Comparing Equations~\ref{eq:turan} and~\ref{eq:bgh}, we see that $\alpha(n, m, 2)$
and $\alpha_\F(n, m, 2)$ behave quite differently. For example, by
Equation~\ref{eq:turan}, every graph with $n$ vertices and $2n$ edges
has an independent set of size at least $n/5$. On the other hand, by
Equation~\ref{eq:bgh}, there exists an alternating bilinear map 
$\phi:\F^n\times\F^n\to\F^{2n}$ with no 
totally-isotropic spaces of dimension $\geq 2$. 
\end{remark}

The results of \cite{Ols78,BGH87} were obtained in the context of abelian 
subgroups of finite $p$-groups, following the works of Burnside \cite{Bur13} and 
Alperin \cite{Alp65}. Abelian subgroups of general finite groups were studied by 
Erd\H{o}s and Straus \cite{ES76} and Pyber \cite{Pyb97}. 

The techniques of \cite{Ols78,BGH87} are worth noting. The upper bound for 
$\alpha_\F(n, m, 2)$ is through a probabilistic argument in the linear algebra or 
geometry settings. As Pyber indicated \cite{Pyb97}, this is one of the first 
applications of the random method in group theory. The lower bound for 
$\alpha_\F(n, m, 2)$ in \cite{BGH87} relies on methods from the intersection 
theory in algebraic geometry \cite{HT84}.


\subsection{Tur\'an meets Feldman and Propp} \label{subsec:turan_fp}
After proving his theorem in 
\cite{Tur41}, Tur\'an proposed the corresponding problem for hypergraphs 
\cite{Tur61}, which asks to determine $\TN(n, a, \ell)$ for any $\ell$. 
This problem greatly 
stimulates the development of
extremal combinatorics; one prominent example is Razborov's invention of 
flag algebras \cite{Raz07}. More results and developments can be found in 
surveys by Sidorenko \cite{Sid95} and Keevash \cite{Kee11}.

The corresponding problem for alternating multilinear maps was studied by 
Feldman and Propp \cite{FP92}, with applications to geometry and quantum 
mechanics. 
Their main result is a lower bound of $\alpha_\F(n, m, \ell)$. This lower bound is 
more easily 
described in the form of an upper bound of the Feldman-Propp number 
$\FP_\F(a, m, \ell)$, which is a recursive function and can grow as fast as 
Ackermann's function.

Interestingly, 
Feldman and Propp termed their result as a ``linear Ramsey theorem,'' and compared 
it with the classical 
Ramsey theorem in \cite[Sec. 3]{FP92}. By the 
relations among the three numbers explained in Section~\ref{subsec:three}, this is 
a misnomer, and it is really a linear Tur\'an theorem. In particular, Feldman and 
Propp's 
argument to prove the upper bound for $\FP_\F(a, m, \ell)$ carries over to $\FP(a, 
m, 
\ell)$ in a straightforward way. 

\subsection{Tur\'an's problem for alternating multilinear maps} 
\label{subsec:turan_problem}
Let $\F$ be an 
algebraically closed field. 
From the experience in settling $\alpha_\F(n, m, 2)$ in \cite{BGH87}, the lower 
bound derived from intersection theory matches the upper bound derived from a 
probabilistic argument. For $\alpha_\F(n, m, \ell)$, the same probabilistic 
argument, observed already in \cite{FP92}, gives an upper bound which is the 
smallest integer $a$ satisfying $n<\frac{m}{a}\cdot\binom{a}{\ell}+a$. The lower 
bound obtained from \cite{FP92} is far from this upper 
bound. It seems to us that there are certain substantial 
difficulties to generalise the intersection-theoretic calculations in 
\cite{HT84} which support the lower bound for $\alpha(n, m, 2)$ in \cite{BGH87} to 
provide better lower 
bound for $\alpha(n, m, \ell)$. Therefore, we believe that improving the lower 
bound of 
$\alpha_\F(n, m, \ell)$ for $\ell>2$ is a fascinating open problem. 
%


\subsection{Two opposite structures for totally-isotropic spaces} 
\label{subsec:opp_struct}
Let 
$\phi:(\F^n)^{\times \ell}\to\F^m$ be an alternating $\ell$-linear form. For any 
$W\leq \F^n$, the restriction of $\phi$ to $W$ naturally gives $\phi|_W:W^{\times 
\ell}\to \F^m$. Then $W$ is a totally-isotropic space if and only if $\phi|_W$ is 
the zero map, i.e. $\dim(\span(\phi(W, \dots, W)))=0$. The opposite situation then 
is 
naturally 
when $\phi|_W$ is the full map, i.e. $\dim(\span(\phi(W, \dots, 
W)))=\binom{\dim(W)}{\ell}$. 
So 
we define $W\leq \F^n$ to be a \emph{complete space} of $\phi$, if $\dim(W)\geq 
\ell$ and 
$\dim(\span(\phi(W, \dots, 
W)))=\binom{\dim(W)}{\ell}$. 

That $W$ being a totally-isotropic space for $\phi$ can also be formulated as: for 
any $w_1, \dots, w_\ell\in W$, $\phi(w_1, \dots, w_\ell)=0$. From this viewpoint, 
the opposite structure of totally-isotropic spaces would be anisotropic spaces 
defined in Section~\ref{subsec:grass}.

While it is clear that a complete space is anisotropic, the converse is not 
necessarily true. This is because there exists a non-full alternating 
$\ell$-linear map $\phi$ 
such that for any linearly independent $(v_1, \dots, v_\ell)$, $\phi(v_1, \dots, 
v_\ell)\neq 0$. When $\ell=2$ this is already seen in 
Remark~\ref{rem:compare}, and for general $\ell$ this follows easily from the 
upper bound of $\alpha_\F(n, m, \ell)$ indicated in 
Section~\ref{subsec:turan_problem}. This distinction between complete spaces and 
anisotropic 
spaces does not arise for those alternating multilinear maps constructed from 
hypergraphs, as a non-complete hypergraph certainly misses a hyperedge.
%

Besides being 
natural structures opposite to totally-isotropic spaces, complete spaces and 
anisotropic spaces have known connections to group theory and geometry, 
respectively. 
These connections have been explained in 
Section~\ref{sec:app} to deduce
Corollaries~\ref{cor:groups} and~\ref{cor:geometry} from
Theorem~\ref{thm:main}.


We note that a result relating complete spaces (or 
anisotropic spaces) and cliques, in the spirit of Proposition~\ref{prop:alpha} 
relating 
totally-isotropic spaces and independent sets, is not possible in general.
This can be seen from the algorithmic viewpoint, say 
when $\ell=2$. 
It is well-known 
computing the maximum clique size is NP-hard on 
graphs. Computing the maximum complete space dimension can be achieved in 
randomised polynomial time, when the field size is large enough, by the 
Schwartz-Zippel lemma \cite{Sch80,Zip79}. In \cite{BCG+19}, it is shown that the 
problem of deciding whether an alternating bilinear map is anisotropic subsumes 
the problem of deciding quadratic residuosity modulo squarefree composite numbers, 
a difficult number-theoretic problem. 

\subsection{Ramsey numbers for alternating multilinear maps} Based on which of 
complete spaces and anisotropic spaces play the role of cliques, 
two Ramsey numbers can be defined for alternating multilinear maps.


\begin{definition}
For a field $\F$ and $s, t, \ell\in \N$, $s, t\geq \ell\geq 2$, 
the \emph{Ramsey number for complete spaces}, 
$\CRN_\F(s, t, \ell)$,  is the minimum number $n\in \N$ such that any alternating 
$\ell$-linear map $\phi:(\F^n)^{\times \ell}\to \F^m$ has either a 
totally-isotropic space of dimension $s$, or a complete space of dimension $t$. 

The \emph{Ramsey number for anisotropic spaces}, $\ARN_\F(s, t, \ell)$, is defined 
in the 
same way, except that 
we replace complete spaces with anisotropic spaces in the above. 
\end{definition}
It is clear that $\ARN_\F(s, t, \ell)\leq \CRN_\F(s, t, \ell)$. 

As with hypergraph Ramsey numbers, the first question is whether $\CRN_\F(s, t, 
\ell)$ and $\ARN_\F(s,t,\ell)$ are finitely upper 
bounded, and 
if so, provide an explicit bound as tight as possible. Improving bounds for 
hypergraph Ramsey numbers is a major research topic in Ramsey theory, with 
classical works by Ramsey \cite{Ram30} and Erd\H{o}s and Rado \cite{ER52}, and the 
recent breakthrough by Conlon, Fox, and Sudakov \cite{CFS10}.
However, directly applying those methods for the hypergraph Ramsey theorem 
seems not to work. One reason is that the 
``correlation'' in the linear algebraic world prohibits the divide and conquer 
paradigm. For example, one way to prove the $\ell$-uniform hypergraph Ramsey 
theorem is to use a recursive relation for Ramsey numbers as
$$
\RN(s, t, \ell)\leq \RN(\RN(s-1, t, \ell), \RN(s, t-1, \ell), \ell-1)+1.
$$
Such a relation does not carry over to $\ARN_\F$ and $\CRN_\F$, at least not 
directly. 
The problem lies in the $\RN(s, t-1, \ell)$ and then $t-1$ branch, as we cannot 
``merge'' the two conditions into one to obtain the desired dimension-$t$ 
anisotropic or complete space. 

To the best of our knowledge, it is not even known that $\CRN$ and $\ARN$ are 
finitely bounded for general $\ell$. We therefore propose the following conjecture.
\begin{conjecture}\label{conj:multilinear}
The Ramsey numbers for complete spaces and anisotropic spaces,
$\CRN_\F(s, t, \ell)$ and $\CRN_\F(s, t, \ell)$, are upper bounded by explicit 
functions in $s$, $t$, and $\ell$.
\end{conjecture}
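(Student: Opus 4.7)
The plan is to proceed by induction on the arity $\ell$, with the base case $\ell=2$ supplied by Theorem~\ref{thm:main}. For the inductive step, given an alternating $\ell$-linear map $\phi:(\F^n)^{\times \ell}\to\F^m$ with $n$ sufficiently large, the natural attempt is to iteratively grow a candidate subspace one vector at a time, using the inductive hypothesis on contractions. Concretely, pick a nonzero $v_1\in\F^n$ and form the contraction $\phi_{v_1}(u_2,\dots,u_\ell):=\phi(v_1,u_2,\dots,u_\ell)$. Because $\phi$ is alternating, this factors through the quotient $\F^n/\langle v_1\rangle$, producing an alternating $(\ell-1)$-linear map on a space of dimension $n-1$. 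Apply the inductive bound on $\ARN_\F(\cdot,\cdot,\ell-1)$ (or $\CRN_\F$) with appropriately boosted parameters to obtain, inside $\F^n/\langle v_1\rangle$, either a large totally-isotropic subspace for $\phi_{v_1}$ or a large anisotropic/complete subspace for $\phi_{v_1}$. Lift back and iterate.

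In the totally-isotropic branch, the argument is essentially the hypergraph one: if at every step we remain inside the totally-isotropic subspace for the contraction $\phi_{v_1,\dots,v_{i-1}}$ previously found, then after $s$ iterations the chosen $v_1,\dots,v_s$ span a totally-isotropic subspace of $\phi$ itself. This direction therefore seems tractable by a direct recursion of the form $\ARN_\F(s,t,\ell)\leq F\bigl(\ARN_\F(s-1,t',\ell-1)\bigr)$ for a suitable $t'$, provided one can calibrate parameters so that the other branch also closes up.

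The hard part, exactly as hinted in Section~\ref{sec:ramsey}, will be the anisotropic/complete branch. Being anisotropic is a condition on all linearly independent $\ell$-tuples simultaneously; knowing that each successive contraction $\phi_{v_1,\dots,v_{i-1}}$ is anisotropic on a large subspace controls only those $\ell$-tuples whose first $i-1$ entries are prescribed, and says nothing about mixed tuples of the form $(v_{i_1},\dots,v_{i_\ell})$ with other orderings or missing indices. So the naive recursion loses control precisely when trying to merge the "$t-1$ branches" coming from different prefixes, which is why the Erd\H{o}s--Rado strategy collapses in the linear setting.

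Accordingly, my expectation is that closing the anisotropic/complete branch will require a more global technique. One promising route is to work inside the exterior algebra $\bigwedge^\ell \F^n$ and combine a Frankl--Kalai--Alon type shifting with the weak Tur\'an-type lower bound on $\alpha_\F(n,m,\ell)$ from~\cite{FP92}, using Theorem~\ref{thm:main} at each $2$-dimensional slice to boost a Tur\'an bound into a Ramsey bound. A second route is a dimension-increment scheme: repeatedly apply Theorem~\ref{thm:main} to the bilinear maps obtained by fixing $\ell-2$ generic vectors, and argue that either many of these bilinear maps yield a common large totally-isotropic direction (which accumulates into a totally-isotropic subspace of $\phi$), or a generic one yields a large complete subspace that, by an averaging argument over the choices of the fixed vectors, can be promoted to a complete subspace of $\phi$. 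Either route should produce an explicit, if tower-type, bound in $s$, $t$, and $\ell$, matching the flavor of known hypergraph Ramsey bounds.
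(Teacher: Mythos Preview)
The statement you are addressing is \emph{Conjecture}~\ref{conj:multilinear}, and the paper does \emph{not} prove it. The paper explicitly says that it is not even known whether $\CRN_\F(s,t,\ell)$ and $\ARN_\F(s,t,\ell)$ are finite for $\ell>2$, and offers Theorem~\ref{thm:main} only as the $\ell=2$ case. So there is no ``paper's own proof'' to compare against; your task was, in effect, to settle an open problem.

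Your proposal does not do so. You correctly reproduce the diagnosis already given in Section~\ref{sec:ramsey}: the inductive/Erd\H{o}s--Rado scheme handles the totally-isotropic branch but collapses in the complete/anisotropic branch, because anisotropy (or completeness) of each contraction $\phi_{v_1,\dots,v_{i-1}}$ controls only tuples with a prescribed prefix and gives no information about mixed tuples. Having identified this obstruction, you then offer two ``promising routes'' without executing either. The first (exterior-algebra shifting combined with the Feldman--Propp bound and Theorem~\ref{thm:main} on $2$-dimensional slices) is not an argument but a list of ingredients; you have not said what statement shifting would produce here, nor why a Tur\'an-type lower bound on totally-isotropic spaces would help you \emph{avoid} them in the complete branch. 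The second (fix $\ell-2$ generic vectors, apply Theorem~\ref{thm:main} to the resulting bilinear map, and ``promote'' via averaging) runs into exactly the obstruction you just named: a complete subspace for $\phi(w_1,\dots,w_{\ell-2},\cdot,\cdot)$ depends on the choice of $w_1,\dots,w_{\ell-2}$, and there is no mechanism offered for gluing such subspaces across different choices into a single complete subspace of $\phi$. ``An averaging argument'' is a placeholder, not a step.

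In short, the proposal identifies the known difficulty and gestures at two strategies, but neither is carried far enough to constitute a proof, and no new idea is supplied that would overcome the merging problem in the $t-1$ branch. As written, this remains a restatement of the open problem rather than a resolution of it.
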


Our main result just proves Conjecture~\ref{conj:multilinear} in the case of 
$\ell=2$. It is also somewhat 
surprising, as it gives a \emph{polynomial upper bound} for $\CRN$, 
and therefore 
$\ARN$, in the 
case of $\ell=2$. As the reader already saw in Section~\ref{sec:thm_proof}, 
its proof strategy is very different 
from those for proving the graph Ramsey theorem.


\section*{Acknowledgments} 
The author would like to thank the anonymous 
reviewers 
for careful reading and thoughtful suggestions which helped to improve the 
presentation of this paper significantly. The author also would like to thank 
Yinan Li and L\'aszl\'o Pyber for helpful 
feedback, and George Glauberman for sharing his insights into \cite{BGH87}.

\newcommand{\etalchar}[1]{$^{#1}$}
\bibliographystyle{amsplain}

\begin{thebibliography}{BCG{\etalchar{+}}21}

\bibitem[Alo85]{Alo85}
Noga Alon.
\newblock An extremal problem for sets with applications to graph theory.
\newblock {\em Journal of Combinatorial Theory, Series A}, 40(1):82--89, 1985.

\bibitem[Alp65]{Alp65}
J.~L. Alperin.
\newblock Large abelian subgroups of p-groups.
\newblock {\em Transactions of the American Mathematical Society}, 117:10--20,
  1965.

\bibitem[Atk73]{Atk73}
M.~D. Atkinson.
\newblock Alternating trilinear forms and groups of exponent 6.
\newblock {\em Journal of the Australian Mathematical Society}, 16(1):111--128,
  1973.

\bibitem[Bae38]{Bae38}
Reinhold Baer.
\newblock Groups with abelian central quotient group.
\newblock {\em Transactions of the American Mathematical Society},
  44(3):357--386, 1938.

\bibitem[BCG{\etalchar{+}}21]{BCG+19}
Xiaohui Bei, Shiteng Chen, Ji~Guan, Youming Qiao, and Xiaoming Sun.
\newblock From independent sets and vertex colorings to isotropic spaces and
  isotropic decompositions: Another bridge between graphs and alternating
  matrix spaces.
\newblock {\em {SIAM} J. Comput.}, 50(3):924--971, 2021.

\bibitem[BGH87]{BGH87}
Joe Buhler, Ranee Gupta, and Joe Harris.
\newblock Isotropic subspaces for skewforms and maximal abelian subgroups of
  $p$-groups.
\newblock {\em Journal of Algebra}, 108(1):269--279, 1987.

\bibitem[BL03]{BL03}
V.~Bergelson and A.~Leibman.
\newblock Topological multiple recurrence for polynomial configurations in
  nilpotent groups.
\newblock {\em Advances in Mathematics}, 175(2):271--296, 2003.

\bibitem[BMW17]{BMW17}
Peter~A. Brooksbank, Joshua Maglione, and James~B. Wilson.
\newblock A fast isomorphism test for groups whose {Lie} algebra has genus 2.
\newblock {\em J. Algebra}, 473:545--590, 2017.

\bibitem[Bol65]{Bol65}
B{\'e}la Bollob{\'a}s.
\newblock On generalized graphs.
\newblock {\em Acta Mathematica Academiae Scientiarum Hungarica},
  16(3-4):447--452, 1965.

\bibitem[Bol04]{Bol04}
B{\'e}la Bollob{\'a}s.
\newblock {\em Extremal graph theory}.
\newblock Courier Corporation, 2004.

\bibitem[Bur13]{Bur13}
W.~Burnside.
\newblock On some properties of groups whose orders are powers of primes.
\newblock {\em Proceedings of the London Mathematical Society}, 2(1):225--245,
  1913.

\bibitem[CFS10]{CFS10}
David Conlon, Jacob Fox, and Benny Sudakov.
\newblock Hypergraph ramsey numbers.
\newblock {\em Journal of the American Mathematical Society}, 23(1):247--266,
  2010.

\bibitem[DS10]{DS10}
Jan Draisma and Ron Shaw.
\newblock Singular lines of trilinear forms.
\newblock {\em Linear algebra and its applications}, 433(3):690--697, 2010.

\bibitem[ER52]{ER52}
Paul Erd\H{o}s and Richard Rado.
\newblock Combinatorial theorems on classifications of subsets of a given set.
\newblock {\em Proceedings of the London mathematical Society}, 3(1):417--439,
  1952.

\bibitem[ES76]{ES76}
P.~Erd\H{o}s and E.~G. Straus.
\newblock How abelian is a finite group?
\newblock {\em Linear and Multilinear Algebra}, 3(4):307--312, 1976.

\bibitem[FP92]{FP92}
David Feldman and James Propp.
\newblock A linear {Ramsey} theorem.
\newblock {\em Advances in Mathematics}, 95(1):1--7, 1992.

\bibitem[Fra82]{Fra82}
Peter Frankl.
\newblock An extremal problem for two families of sets.
\newblock {\em European Journal of Combinatorics}, 3(2):125--127, 1982.

\bibitem[GQ06]{GQ06}
Rod Gow and Rachel Quinlan.
\newblock On the vanishing of subspaces of alternating bilinear forms.
\newblock {\em Linear and Multilinear Algebra}, 54(6):415--428, 2006.

\bibitem[GQ21]{GQ19}
Joshua~A. Grochow and Youming Qiao.
\newblock On the complexity of isomorphism problems for tensors, groups, and
  polynomials {I:} tensor isomorphism-completeness.
\newblock In James~R. Lee, editor, {\em 12th Innovations in Theoretical
  Computer Science Conference, {ITCS} 2021, January 6-8, 2021, Virtual
  Conference}, volume 185 of {\em LIPIcs}, pages 31:1--31:19. Schloss Dagstuhl
  - Leibniz-Zentrum f{\"{u}}r Informatik, 2021.

\bibitem[HJ63]{HJ63}
A.~W. Hales and R.~I. Jewett.
\newblock Regularity and positional games.
\newblock {\em Transactions of the American Mathematical Society},
  106(2):222--229, 1963.

\bibitem[HQ21]{HQ20a}
Xiaoyu He and Youming Qiao.
\newblock On the Baer-Lov{\'{a}}sz-Tutte construction of groups from graphs:
  Isomorphism types and homomorphism notions.
\newblock {\em Eur. J. Comb.}, 98:103404, 2021.

\bibitem[HT84]{HT84}
Joe Harris and Loring~W. Tu.
\newblock On symmetric and skew-symmetric determinantal varieties.
\newblock {\em Topology}, 23(1):71--84, 1984.

\bibitem[JJR17]{JR17}
John~H. Johnson~Jr. and Florian~Karl Richter.
\newblock Revisiting the nilpotent polynomial {Hales}--{Jewett} theorem.
\newblock {\em Advances in Mathematics}, 321:269--286, 2017.

\bibitem[Kal84]{Kal84}
Gil Kalai.
\newblock Weakly saturated graphs are rigid.
\newblock In {\em North-Holland Mathematics Studies}, volume~87, pages
  189--190. Elsevier, 1984.

\bibitem[Kal02]{Kal02}
Gil Kalai.
\newblock Algebraic shifting.
\newblock In {\em Computational commutative algebra and combinatorics}, pages
  121--163. Mathematical Society of Japan, 2002.

\bibitem[KB09]{KB09}
Tamara~G. Kolda and Brett~W. Bader.
\newblock Tensor decompositions and applications.
\newblock {\em SIAM review}, 51(3):455--500, 2009.

\bibitem[Kee11]{Kee11}
Peter Keevash.
\newblock Hypergraph {Tur{\'a}n} problems.
\newblock {\em Surveys in combinatorics}, 392:83--140, 2011.

\bibitem[Lei98]{Lei98}
A.~Leibman.
\newblock Multiple recurrence theorem for measure preserving actions of a
  nilpotent group.
\newblock {\em Geometric \& Functional Analysis GAFA}, 8(5):853--931, 1998.

\bibitem[Lov77]{Lov77}
L{\'a}szl{\'o} Lov{\'a}sz.
\newblock Flats in matroids and geometric graphs.
\newblock In {\em Combinatorial Surveys (Proc. 6th British Combinatorial
  Conference)}, pages 45--86, 1977.

\bibitem[Lov79]{Lov79}
L{\'{a}}szl{\'{o}} Lov{\'{a}}sz.
\newblock On determinants, matchings, and random algorithms.
\newblock In {\em Fundamentals of Computation Theory, {FCT} 1979, Proceedings
  of the Conference on Algebraic, Arthmetic, and Categorial Methods in
  Computation Theory, Berlin/Wendisch-Rietz, Germany, September 17-21, 1979},
  pages 565--574, 1979.

\bibitem[LQ17]{LQ17}
Yinan Li and Youming Qiao.
\newblock Linear algebraic analogues of the graph isomorphism problem and the
  {Erd{\H{o}}s}--{R{\'{e}}nyi} model.
\newblock In Chris Umans, editor, {\em 58th {IEEE} Annual Symposium on
  Foundations of Computer Science, {FOCS} 2017, Berkeley, CA, USA, October
  15-17, 2017}, pages 463--474. {IEEE} Computer Society, 2017.

\bibitem[LQ20]{LQ19}
Yinan Li and Youming Qiao.
\newblock Group-theoretic generalisations of vertex and edge connectivities.
\newblock {\em Proc. Amer. Math. Soc.}, 148(11):4679--4693, 2020.

\bibitem[MH73]{MH73}
John~Willard Milnor and Dale Husemoller.
\newblock {\em Symmetric bilinear forms}, volume~73 of {\em Ergebnisse der
  Mathematik und ihrer Grenzgebiete}.
\newblock Springer, 1973.

\bibitem[Neu67]{Neu67}
Hanna Neumann.
\newblock {\em Varieties of groups}, volume~37 of {\em Ergebnisse der
  Mathematik}.
\newblock Springer-Verlag New York Inc., 1967.

\bibitem[Ol'78]{Ols78}
A.~Yu Ol'shanskii.
\newblock The number of generators and orders of abelian subgroups of finite
  p-groups.
\newblock {\em Mathematical notes of the Academy of Sciences of the USSR},
  23(3):183--185, 1978.

\bibitem[Pyb97]{Pyb97}
L{\'a}szl{\'o} Pyber.
\newblock How abelian is a finite group?
\newblock In {\em The Mathematics of Paul Erd\H{o}s I}, pages 372--384.
  Springer, 1997.

\bibitem[Qia21]{Qia20}
Youming Qiao.
\newblock Enumerating alternating matrix spaces over finite fields with
  explicit coordinates.
\newblock {\em Discret. Math.}, 344(11):112580, 2021.

\bibitem[Ram30]{Ram30}
F.~P. Ramsey.
\newblock On a problem of formal logic.
\newblock {\em Proceedings of the London Mathematical Society},
  s2-30(1):264--286, 1930.

\bibitem[Raz07]{Raz07}
Alexander~A. Razborov.
\newblock Flag algebras.
\newblock {\em Journal of Symbolic Logic}, 72(4):1239--1282, 2007.

\bibitem[Sch80]{Sch80}
Jacob~T. Schwartz.
\newblock Fast probabilistic algorithms for verification of polynomial
  identities.
\newblock {\em J. {ACM}}, 27(4):701--717, 1980.

\bibitem[Sid95]{Sid95}
Alexander Sidorenko.
\newblock What we know and what we do not know about {Tur{\'a}n} numbers.
\newblock {\em Graphs and Combinatorics}, 11(2):179--199, 1995.

\bibitem[Sim65]{Sims65}
Charles~C. Sims.
\newblock Enumerating p-groups.
\newblock {\em Proceedings of the London Mathematical Society}, 3(1):151--166,
  1965.

\bibitem[SW21]{SW19}
Alex Scott and Elizabeth Wilmer.
\newblock Combinatorics in the exterior algebra and the bollob{\'a}s two
  families theorem.
\newblock {\em Journal of the London Mathematical Society}, 104(4):1812--1839,
  2021.

\bibitem[Tur41]{Tur41}
Paul Tur{\'a}n.
\newblock Egy gr\'afelm\'eleti sz\'els{\H{o}}\'ert\'ekfeladatr\'ol (on an
  extremal problem in graph theory).
\newblock {\em Mat. Fiz. Lapok}, 48:436--452, 1941.

\bibitem[Tur61]{Tur61}
Paul Tur{\'a}n.
\newblock Research problems.
\newblock {\em K{\"o}zl MTA Mat. Kutat{\'o} Int.}, 6:417--423, 1961.

\bibitem[Tut47]{Tut47}
W.~T. Tutte.
\newblock The factorization of linear graphs.
\newblock {\em Journal of the London Mathematical Society}, s1-22(2):107--111,
  1947.

\bibitem[vdW27]{Wae27}
Bartel van~der Waerden.
\newblock Beweis einer baudetschen vermutung.
\newblock {\em Nieuw Arch. Wisk.}, 19:212--216, 1927.

\bibitem[Wea17]{Wea17}
Nik Weaver.
\newblock A ``quantum'' {Ramsey} theorem for operator systems.
\newblock {\em Proceedings of the American Mathematical Society},
  145(11):4595--4605, 2017.

\bibitem[Wil09]{Wil09a}
James~B. Wilson.
\newblock Decomposing {$p$}-groups via {J}ordan algebras.
\newblock {\em J. Algebra}, 322:2642--2679, 2009.

\bibitem[Zip79]{Zip79}
Richard Zippel.
\newblock Probabilistic algorithms for sparse polynomials.
\newblock In Edward~W. Ng, editor, {\em Symbolic and Algebraic Computation,
  {EUROSAM} '79, An International Symposiumon Symbolic and Algebraic
  Computation, Marseille, France, June 1979, Proceedings}, volume~72 of {\em
  Lecture Notes in Computer Science}, pages 216--226. Springer, 1979.

\end{thebibliography}


\begin{dajauthors}
\begin{authorinfo}[yq]
  Youming Qiao\\
  Centre for Quantum Software and Information \\
  University of Technology Sydney \\
  15 Broadway, Ultimo NSW 2007, Australia\\
  jimmyqiao86\imageat{}gmail\imagedot{}com \\
  \url{https://sites.google.com/site/jimmyqiao86/}
\end{authorinfo}
\end{dajauthors}

\end{document}